\allowdisplaybreaks \numberwithin{equation}{section}
\numberwithin{equation}{section}
\newtheorem{theorem}{Theorem}[section]
\newtheorem{lemma}[theorem]{Lemma}
\theoremstyle{definition}
\newtheorem{definition}[theorem]{Definition}
\theoremstyle{remark}
\newtheorem{remark}[theorem]{Remark}
\begin{document}

\title
[Steady vortex patches near a rotating flow]{Steady vortex patches near a rotating flow with constant vorticity in a planar bounded domain}

 \author{Guodong Wang, Bijun Zuo}

\address{Institute for Advanced Study in Mathematics, Harbin Institute of Technology, Harbin 150001, P. R. China}
\email{wangguodong14@mails.ucas.ac.cn}
\address{Institute of Applied Mathematics, Chinese Academy of Science, Beijing 100190, and University of Chinese Academy of Sciences, Beijing 100049,  P.R. China}
\email{bjzuo@amss.ac.cn}


\begin{abstract}
In this paper, we study steady vortex patch solutions to the incompressible Euler equations in a planar bounded domain $D$. Let $\psi_0$ be the solution of the elliptic problem $-\Delta \psi _{0} =1$ in $D$; $\psi_0=0$ on $\partial D$. We prove that for any finite collection of isolated maximum points of $\psi_0$, say $\{x_1,\cdot\cdot\cdot,x_k\},$ and any $k$-tuple $\vec{\kappa}=(\kappa_1,\cdot,\cdot,\cdot,\kappa_k)$ with $\kappa_i>0$ and $|\vec{\kappa}|:=\sum_{i=1}^k\kappa_i<<1,$ there exists a steady solution of the Euler equations such that the vorticity
has the form $\omega^{\vec{\kappa}}=1-I_{\cup_{i=1}^k A^{\vec{\kappa}}_i}$, where $I$ denotes the characteristic function, $|A^{\vec{\kappa}}_i|=\kappa_i$ and $A^{\vec{\kappa}}_i$ ``shrinks" to $x_i$ as $|\vec{\kappa}|\to 0$.


\end{abstract}

\maketitle

\section{Introduction}
In this paper, we are concerned with steady solutions of the two-dimensional incompressible Euler equations in a bounded domain $D$. The governing equations with prescribed boundary condition are as follows
\begin{equation}\label{100}
\begin{cases}
(\mathbf{v}\cdot\nabla)\mathbf{v}=-\nabla P,&x\in D,\\
 \nabla\cdot\mathbf{v}=0,&x\in D,\\
 \mathbf{v}\cdot\mathbf{n}=g,&x\in\partial D,
\end{cases}
\end{equation}
where $\mathbf{v}=(v_1,v_2)$ is the velocity field of the fluid, $P$ is the scalar pressure, $x=(x_1,x_2)$ is the space variable, $\mathbf{n}$ is the outward unit normal of $\partial D$, the boundary of $D$, and $g$ is the normal component of $\mathbf{v}$ on $\partial D$ satisfying the following compatibility condition
\[\int_{\partial D}g(x)dS_x=0.\]

The $curl$ of the velocity field, called the scalar vorticity, defined by $\omega:=\partial_{x_1}v_2-\partial_{x_2}v_1$, is an elementary quantity and plays a crucial role in the study of the Euler equations. See \cite{MajBer}\cite{Saff} for example. In terms of the vorticity, system \eqref{100} can be simplified as a single equation as follows
\begin{equation}\label{110}
\mathbf{v}\cdot\nabla\omega=0,\,\, \mathbf{v}=K\omega,
\end{equation}
where the relation $\mathbf{v}=K\omega$ is called the Biot-Savart law, and the operator $K$ is determined by the domain $D$ and the boundary condition $g$. As we will see in Section 2, for the case $D$ is smooth and simply-connected and $g\equiv0$, $K$ can be expressed in terms of the Green's function for $-\Delta$ in $D$ with zero Dirchlet data. Equation \eqref{110} is usually called the vorticity equation.
Mathematically, the vorticity equation is much easier to handle. Moreover, the velocity can be completely recovered from the vorticity via the Biot-Savart law. For these reasons, we will focus our attention on the vorticity equation in the rest of this paper.

Here we are mainly interested in the vortex patch solution of the Euler equations \eqref{100}, that is, the vorticity has the form $\omega=\lambda I_{A}$, where $\lambda$ is a real number representing the vorticity strength and $I_A$ denotes the characteristic function of some measurable set $A$, that is, $I_A=\lambda$ in $A$ and $I_A=0$ elsewhere. Physically, the fluid with vorticity $\omega=\lambda I_A$ is irrotational outside $A$(or there is no relative rotation between two nearby fluid particles), and rotates with constant strength in $A$. Vortex patch solutions are a typical class of non-smooth solutions of the incompressible Euler equations(including the non-stationary case) which are used to model some natural phenomena with discontinuity in vorticity, such as the hurricane.

There are mainly two kinds of steady Euler flows that are very important both physically and mathematically in the literature. The first one is of concentration type, or desingularization type, which is closely related to the point vortex model. According to the point vortex model, the evolution of $N$ concentrated vortices(i.e., the vorticity is a sum of $N$ Dirac delta measures) is governed by a Hamiltonian system of $N$ degrees of freedom with the Kirchhoff-Routh function as the Hamiltonian. See \cite{L}\cite{MP2}\cite{MP3}\cite{T3} for example. Desingularization of point vortices is to construct a family of steady Euler flows in which the vorticity is sufficiently supported near a given critical point of the Kirchhoff-Routh function.  Desingularization is a very useful tool to explore dynamically possible equilibria of planar Euler flows and analyze their stability. See \cite{CLW}\cite{CPY}\cite{CPY2}\cite{CW0}\cite{CW}\cite{CW2}\cite{SV}\cite{W} for example.
The second kind of steady Euler flows is of perturbation type. It aims to construct a new steady solution of the Euler equations near a given one. Related results can be found in \cite{CW3}\cite{LP}\cite{LYY}.
In \cite{CW3}, the authors proved that for any harmonic function $q$ corresponding to a nontrivial irrotational Euler flow, there exists a family of steady vortex patch solutions that are supported near a given finite collection of strict extreme points of $q$. In \cite{LP}\cite{LYY}, the authors also obtained similar results for smooth Euler flows. In contrast to the concentration type where the circulation has a positive lower bound, the perturbation results in \cite{CW3}\cite{LP}\cite{LYY} dealt with the case in which the circulation vanishes as the parameter changes. For the three-dimensional Euler equations, corresponding perturbation results can be found in \cite{Al}\cite{TX}, while to our knowledge there is no existence result of concentration type in the literature.

In this paper, we continue the study on the existence of steady Euler flows of perturbation type. For simplicity, we only consider the impermeability boundary condition, that is, $g\equiv0$ in \eqref{100}. We focus our attention on vortex patch solution in which the vorticity vanishes in a finite number of disjoint regions of small diameter, which we call ``dead cores"(the velocity of the fluid vanishes on these ``dead cores"), while outside these regions the vorticity is a positive constant. More precisely, we consider steady Euler flows in which the vorticity has the form
\begin{equation}\label{vvv}\omega=1-I_{\cup_{i=1}^k A_i},\end{equation}
 where each ``dead core" $A_i$ is located near some given point $x_i$.
First we show that if such a flow exists, then each $x_i$ must a critical point of the function $\psi_0$, the solution of the following elliptic problem
 \begin{equation}\label{psi00}
\begin{cases}
  -\Delta \psi_0=1, & x\in D, \\
  \psi_0=0, & x\in\partial D.
\end{cases}
\end{equation}
Then we show that given $k$ isolated maximum points of $\psi_0$, say $x_1,\cdot\cdot\cdot,x_k$, there exists a steady Euler flow of the form \eqref{vvv} where each ``dead core" $A_i$ is located near $x_i$ and $|A_i|$ is very small. In contrast to \cite{CW3}\cite{LP}\cite{LYY} where the location of the support of the vorticity is influenced by the background irrotational flow, here the location of each ``dead core" is determined by $\psi_0$. Note that our method also applies to the case $g\neq0,$ though in that situation the location of each ``dead core" is determined by $\psi_0$ and $g$ jointly.

 The main idea of the proof comes from \cite{T}. In \cite{T}, Turkington considered the maximization of the kinetic energy of the fluid
 \[E(\omega)=\frac{1}{2}\int_D|\mathbf{v}|^2dx=\frac{1}{2}\int_D\int_D|K\omega|^2dx\]
 over the following admissible class
 \[\mathcal{M}^\lambda(D)=\{\omega\in L^\infty(D)\mid0\leq\omega\leq\lambda \mbox{ a.e.},\int_D\omega(x)dx=1\},\]
 where $\lambda>0$ is a large positive number such that $\mathcal{M}^\lambda(D)$ is not empty. Turkington proved that $E$ attains its maximum over $\mathcal{M}^\lambda(D)$ and each maximizer is a steady solution of the Euler equations having the form
 $\omega^\lambda=\lambda I_{A^\lambda},$
 where $A^\lambda\subset D$ depends on $\lambda$ and is called the ``vortex core". Moreover, Turkington analyzed the asymptotic behavior of $\omega^\lambda$ and showed that $A^\lambda$ ``shrinks" to a global minimum point of the Robin function of the domain as $\lambda\to+\infty$. Although Turkington's result is of concentration type, but his idea can be generalized to construct other types of steady Euler flows. See \cite{B}\cite{B2}\cite{CW3} for example. To prove our result, following Turkinton's idea, we define a new admissible class with suitable restriction on the support of the vorticity(see \eqref{2000} in Section 3) and consider the minimization of $E$ over it. But in contrast to \cite{T}, the minimizer may not be a steady solution of the Euler equations due to the support restriction. To this end, we need to estimate the Lagrange multiplier arising in the minimization problem. This key ingredient is somewhat different from \cite{T}, and the technique here is mostly inspired by \cite{CW3}. Once we have obtained the estimate of the Lagrange multiplier, we are able to show that each ``dead core" $A_i$ ``shrinks" to $x_i$ respectively. Finally following the idea in \cite{EM} we can prove that the minimizer is indeed a steady solution.

Note that except for the variational method for the vorticity, there is another very effective way to study steady Euler flows, that is, to study the following semilinear elliptic problem satisfied by the stream function
\begin{equation}\label{semi}
\begin{cases}
-\Delta\psi=f(\psi),&x\in D,\\
\frac{\partial\psi}{\partial \mathbf{n}^\perp}=-g,&x\in\partial D,
\end{cases}
\end{equation}
where $\mathbf{n}$ denotes clockwise rotation through ${\pi}/{2}$ of $\mathbf{n}$. It can be proved that if $f:\mathbb R\to\mathbb R$ is a local Lipschitz or monotone function, then $\mathbf{v}=(\partial_{x_2} \psi,-\partial_{x_1}\psi)$ is a solution of \eqref{100}. See \cite{CW4} for example. Such a method is called the stream function method and related references are \cite{CLW}\cite{CPY}\cite{LP}\cite{LYY}\cite{SV}.

 This paper is organized as follows. In Section 2, we give the mathematical formulation of the problem and state the main results. In Section 3 and Section 4, we prove the main results.

\section{Main Results}

To begin with, we give some notations for clarity. Throughout this paper, we assume that $D\subset\mathbb R^2$ is a bounded and simply connected domain with a smooth boundary $\partial D$, and $\mathbf{n}$ denotes the outward unit normal of $\partial D$.
Let $\psi_0$ be the solution of the following elliptic problem
\begin{equation}\label{psi0}
\begin{cases}
  -\Delta \psi_0=1, & x\in D, \\
  \psi_0=0, & x\in\partial D.
\end{cases}
\end{equation}
Note that by Hopf's lemma we have $\frac{\partial \psi_0}{\partial \mathbf{n}}(x)<0$ for each $x\in\partial D.$
Let $\alpha$ be a given positive number. Define the $\alpha$-uniform cone as follows
\[\mathbb{K}^{\alpha}:=\{\vec{\kappa}\in \mathbb R^k\mid\vec{\kappa}=(\kappa_1,\cdot\cdot\cdot,\kappa_k), \kappa_1,\cdot\cdot\cdot,\kappa_k>0, \max\left\{{\kappa_i}/{\kappa_j}\mid i,j=1,\cdot\cdot\cdot,k\right\}\leq\alpha_0\}.\]
The norm of $\vec{\kappa}=(\kappa_1,\cdot\cdot\cdot,\kappa_k)$ is given by
\[|\vec{\kappa}|:=\sum_{i=1}^k\kappa_i.\]

For any measurable function $f:D\to\mathbb R,$ $supp(f)$ denotes the essential support of $f$, the complement of the union of all open sets in which $f$ vanishes.
see \cite{LL}, \S 1.5 for example. For a planar vector $\mathbf{b}=(b_1,b_2)$, the symbol $\mathbf{b}^\perp$ denotes clockwise rotation through $\pi/2$ of $\mathbf{b}$, that is, $\mathbf{b}^\perp=(b_2,-b_1)$. We also write $\nabla^\perp f=(\nabla f)^\perp$ for brevity. For a measurable set $A\subset D,$ $|A|$ denotes the two-dimensional Lebesgue measure of $A$.

Now we consider the following incompressible Euler equations in $D$ with impermeability boundary condition
\begin{equation}\label{233}
\begin{cases}
(\mathbf{v}\cdot\nabla)\mathbf{v}=-\nabla P,&x\in D,\\
 \nabla\cdot\mathbf{v}=0,&x\in D,\\
 \mathbf{v}\cdot\mathbf{n}=0,&x\in\partial D.
\end{cases}
\end{equation}
By the incompressibility condition $\nabla\cdot\mathbf{v}=0$ and the fact that $D$ is a simply connected domain, there exists a function $\psi$, called the stream function, such that $\mathbf{v}=\nabla^\perp \psi.$ This can be verified by using the Green's theorem. Taking into account the impermeability boundary condition $ \mathbf{v}\cdot\mathbf{n}=0$ on $\partial D,$ we deduce that $\psi$ is a constant on $\partial D.$ Without loss of generality, by adding a suitable constant we can assume that $\psi=0$ on $\partial D.$ Now it is easy to check that
\begin{equation}\label{234}
\begin{cases}
-\Delta\psi=\omega,&x\in D,\\
 \psi=0,&x\in \partial D.
\end{cases}
\end{equation}
Let $G$ be the Green's function for $-\Delta$ in $D$ with zero Dirichlet data. Then $\psi$ can be expressed in terms of $\omega$ as follows
\[\psi(x)=G\omega(x):=\int_DG(x,y)\omega(y)dy.\]
On the other hand, we take the curl on both sides of the first equation in \eqref{233} to obtain
\[\mathbf{v}\cdot\nabla\omega=0.\]
Therefore, the Euler system \eqref{233} can be simplified as the following vorticity equation
\begin{equation}\label{235}
\nabla^\perp G\omega\cdot\nabla\omega=0.
\end{equation}

By integration by parts, we can give the weak formulation of the vorticity equation \eqref{235}.
\begin{definition}\label{defff}
Let $\omega\in L^\infty(D)$. If $\omega$ satisfies
\begin{equation}\label{ws}\int_D\omega(x)\nabla^\perp G\omega(x)\cdot\nabla\phi(x)dx=0\end{equation}
for any test function $\phi\in C_c^\infty(D),$
then $\omega$ is called a weak solution of the vorticity equation.
\end{definition}
\begin{remark}\label{remark}
In Definition \ref{defff}, by density argument, it is easy to check that the test function $\phi$ can be chosen arbitrarily in $H^1_0(D)$.
\end{remark}

Now we are ready to state the main results of this paper. The first result is as follows.
\begin{theorem}\label{thm1}
Let $x_1,\cdot\cdot\cdot,x_k\in\overline{D}$ be $k$ different points. Suppose that for each $\vec{\kappa}\in\mathbb{K}^{\alpha}$ with $|\vec{\kappa}|<<1,$ there exists a steady vortex patch solution $\omega^{\vec{\kappa}}$ of the vorticity equation with the form
\begin{equation}\label{hole}
\omega^{\vec{\kappa}}=1-I_{\cup_{i=1}^k A^{\vec{\kappa}}_i}, |A_i^{\vec{\kappa}}|=\kappa_i, A_i^{\vec{\kappa}}\subset B_{o(1)}(x_i),
\end{equation}
where $o(1)\to0$ as $|\vec{\kappa}|\to0.$ Then each $x_i\in D$ and must be a critical point of $\psi_0.$

\end{theorem}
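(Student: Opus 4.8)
The plan is to extract from the weak vorticity equation \eqref{ws} a balance identity on each set $A_i^{\vec{\kappa}}$ and then let $|\vec{\kappa}|\to0$. Write $\psi^{\vec{\kappa}}:=G\omega^{\vec{\kappa}}$ for the associated stream function and put $\varphi^{\vec{\kappa}}:=\psi_0-\psi^{\vec{\kappa}}$. Since $-\Delta\psi^{\vec{\kappa}}=\omega^{\vec{\kappa}}=1-I_{\cup_iA_i^{\vec{\kappa}}}$ and $\psi^{\vec{\kappa}}=0$ on $\partial D$, we have $\varphi^{\vec{\kappa}}=G\bigl(I_{\cup_iA_i^{\vec{\kappa}}}\bigr)\ge0$, and standard Newtonian potential bounds give $\|\varphi^{\vec{\kappa}}\|_{C^0(\overline D)}=O\bigl(|\vec{\kappa}|(1+|\log|\vec{\kappa}||)\bigr)$ together with $\|\nabla\varphi^{\vec{\kappa}}\|_{L^2(D)}^2=\int_{\cup_iA_i^{\vec{\kappa}}}\varphi^{\vec{\kappa}}=o(|\vec{\kappa}|)$. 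I would also record at the outset that $\psi_0\in C^\infty(\overline D)$ by elliptic regularity and that $\nabla\psi_0\ne0$ everywhere on $\partial D$: indeed $\psi_0\equiv0$ on $\partial D$ and $\partial\psi_0/\partial\mathbf{n}<0$ there by Hopf's lemma, so $\nabla\psi_0=(\partial\psi_0/\partial\mathbf{n})\,\mathbf{n}\ne0$ on $\partial D$. Consequently every critical point of $\psi_0$ in $\overline D$ already lies in $D$, and the theorem reduces to showing $\nabla\psi_0(x_i)=0$ for each $i$.

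Assume first $x_i\in D$. Pick $\chi_i\in C_c^\infty(D)$ equal to $1$ on a fixed neighborhood of $x_i$ whose closure is contained in $D$ and contains no $x_j$ with $j\ne i$; for $|\vec{\kappa}|$ small, $A_i^{\vec{\kappa}}\subset\{\chi_i=1\}$ while $A_j^{\vec{\kappa}}\cap\operatorname{supp}\chi_i=\varnothing$ for $j\ne i$. Testing \eqref{ws} with $\phi=(x_m-(x_i)_m)\chi_i\in C_c^\infty(D)$ ($m=1,2$) and writing $\omega^{\vec{\kappa}}=1-I_{\cup_jA_j^{\vec{\kappa}}}$: the contribution of the constant $1$ is $\int_D\nabla^\perp\psi^{\vec{\kappa}}\cdot\nabla\phi\,dx=\int_D\nabla\cdot\bigl(\phi\,\nabla^\perp\psi^{\vec{\kappa}}\bigr)dx=0$, because $\operatorname{div}\nabla^\perp\psi^{\vec{\kappa}}=0$ and $\phi$ has compact support, and on $\operatorname{supp}\chi_i$ only $A_i^{\vec{\kappa}}$ survives, where $\nabla\phi=e_m$. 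Hence $\int_{A_i^{\vec{\kappa}}}\nabla\psi^{\vec{\kappa}}\,dx=0$. Next, writing $\nabla_xG(x,y)=-\tfrac1{2\pi}\tfrac{x-y}{|x-y|^2}+\nabla_xh(x,y)$ with $h$ the regular part, one gets $\int_{A_i^{\vec{\kappa}}}\nabla\varphi^{\vec{\kappa}}\,dx=\sum_j\iint_{A_i^{\vec{\kappa}}\times A_j^{\vec{\kappa}}}\nabla_xG(x,y)\,dy\,dx$; for $j=i$ the singular part cancels by the antisymmetry $x\leftrightarrow y$ and the regular part is $O(\kappa_i^2)$, while for $j\ne i$ the integrand is bounded since the two sets stay separated, giving $O(\kappa_i\kappa_j)$; altogether $\int_{A_i^{\vec{\kappa}}}\nabla\varphi^{\vec{\kappa}}\,dx=O(\kappa_i|\vec{\kappa}|)$. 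Therefore $\tfrac1{\kappa_i}\int_{A_i^{\vec{\kappa}}}\nabla\psi_0\,dx=\tfrac1{\kappa_i}\int_{A_i^{\vec{\kappa}}}\nabla\varphi^{\vec{\kappa}}\,dx=O(|\vec{\kappa}|)\to0$; since $\nabla\psi_0$ is continuous near $x_i$ and $A_i^{\vec{\kappa}}$ shrinks to $x_i$, the left-hand side converges to $\nabla\psi_0(x_i)$, so $\nabla\psi_0(x_i)=0$.

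The remaining, and main, difficulty is to rule out $x_i\in\partial D$. Here the affine test function above is inadmissible — a function vanishing on $\partial D$ cannot coincide with $x_m-(x_i)_m$ on a set clustering at $\partial D$ — and, more fundamentally, near a boundary point the background flow $\nabla^\perp\psi_0$ is tangent to $\partial D$, so \eqref{ws} is satisfied automatically to leading order against every admissible $\phi$ and one must work at the next order. I would proceed in boundary-straightening coordinates $(s,t)$ near $x_i$ (so $t=\operatorname{dist}(\cdot,\partial D)$ and $\psi_0=A_1(s)t-\tfrac12t^2+O(t^3)$ with $A_1=-\partial\psi_0/\partial\mathbf{n}\ge a_0>0$): testing \eqref{ws} with $\phi=B(s)\,t$ times a cutoff — which does vanish on $\partial D$ — choosing $B$ with $A_1B'-A_1'B\equiv A_1^2$, and absorbing the $\varphi^{\vec{\kappa}}$–error via $\|\nabla\varphi^{\vec{\kappa}}\|_{L^2}=o(|\vec{\kappa}|^{1/2})$ and the uniform–cone bound $\kappa_i\gtrsim|\vec{\kappa}|$, one obtains $\int_{A_i^{\vec{\kappa}}}\operatorname{dist}(x,\partial D)\,dx=o(\kappa_i)$; thus $A_i^{\vec{\kappa}}$ concentrates in an arbitrarily thin boundary layer, on which $\nabla^\perp\psi^{\vec{\kappa}}$ equals, up to $o(1)$, the fixed nonzero tangential vector $\nabla^\perp\psi_0(x_i)$. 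The final step is to show that this is incompatible with $\omega^{\vec{\kappa}}$ being steady with $A_i^{\vec{\kappa}}\subset B_{o(1)}(x_i)$ of area $\kappa_i$ — intuitively, a non-degenerate near-uniform shear cannot keep a patch of fixed area confined to a shrinking ball — and carrying this out rigorously (for instance via the Gauss–Green characterization of steadiness on the measure-theoretic boundary of $A_i^{\vec{\kappa}}$, together with the fact that the level curves of $\psi_0$ near $\partial D$ run along the whole of $\partial D$ rather than closing up near $x_i$) is the technical heart of the argument.
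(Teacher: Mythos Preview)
Your treatment of the interior case is fine and is essentially the paper's argument: both test \eqref{ws} with an affine function times a cutoff and pass to the limit. The only difference is cosmetic---the paper simply invokes $G\omega^{\vec{\kappa}}\to\psi_0$ in $C^1(\overline D)$ (by $L^p$ estimates and Sobolev embedding, since $\omega^{\vec{\kappa}}\to1$ in every $L^p$) to conclude $\tfrac{1}{\kappa_i}\int_{A_i^{\vec{\kappa}}}\nabla^\perp G\omega^{\vec{\kappa}}\,dx\to\nabla^\perp\psi_0(x_i)$, whereas you rederive this via a Green-kernel decomposition. Either way works.

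The boundary case, however, is a genuine gap. You yourself describe the step that would close the argument as ``the technical heart'' and leave it undone; the heuristic that ``a non-degenerate near-uniform shear cannot keep a patch of fixed area confined to a shrinking ball'' is not turned into an actual contradiction, and the route you sketch (boundary-straightening, solving $A_1B'-A_1'B=A_1^2$, Gauss--Green on the measure-theoretic boundary) is substantially more elaborate than what is needed. The paper disposes of $x_i\in\partial D$ in a few lines with a single clever test function. Write $\mathbf{b}:=\nabla^\perp\psi_0(x_i)$, which is nonzero by Hopf, and take
\[
\phi(x)=\chi(x)\,G\omega^{\vec{\kappa}}(x)\,(\mathbf{b}\cdot x),
\]
with $\chi$ a smooth cutoff near $x_i$. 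Since $G\omega^{\vec{\kappa}}=0$ on $\partial D$, this $\phi$ lies in $H^1_0(D)$ and is therefore admissible (the affine factor alone is not, exactly as you observed, but multiplying by the stream function fixes this). When you expand $\nabla\phi$, the term $\chi\,(\mathbf{b}\cdot x)\,\nabla G\omega^{\vec{\kappa}}$ drops out for free because $\nabla^\perp G\omega^{\vec{\kappa}}\cdot\nabla G\omega^{\vec{\kappa}}\equiv0$, and the $\nabla\chi$ term vanishes on $A_i^{\vec{\kappa}}$. What remains is
\[
\int_{A_i^{\vec{\kappa}}}\bigl(\nabla^\perp G\omega^{\vec{\kappa}}\cdot\mathbf{b}\bigr)\,G\omega^{\vec{\kappa}}\,dx=0.
\]
Now $G\omega^{\vec{\kappa}}>0$ in $D$ by the strong maximum principle, and $\nabla^\perp G\omega^{\vec{\kappa}}\to\mathbf{b}$ uniformly on $A_i^{\vec{\kappa}}$, so $\nabla^\perp G\omega^{\vec{\kappa}}\cdot\mathbf{b}\ge\tfrac12|\mathbf{b}|^2>0$ there for $|\vec{\kappa}|$ small. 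The integral is then strictly positive---a contradiction. This single device (multiply the inadmissible affine test by the stream function itself and exploit $\nabla^\perp\psi\cdot\nabla\psi=0$) is the idea you are missing; it replaces your entire boundary-layer program.
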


A natural question is the existence of steady solution of the form \eqref{hole}.
By strong maximum principle, $\psi_0$ attains its minimum value only on $\partial D,$ so there are only two types of critical points for $\psi_0$, that is, maximum points and saddle points.  The second result of this paper is concerned with the case when each $A_i^{\vec{\kappa}}$ is near an isolated maximum point of $\psi_0$.
\begin{theorem}\label{thm2}
Let $x_1,\cdot\cdot\cdot,x_k$ be $k$ different isolated maximum points of $\psi_0$. Then there exists $\delta_0>0$, such that for each $\vec{\kappa}\in\mathbb K^{\alpha},$ $|\vec{\kappa}|<\delta_0,$ there is a steady vortex patch solution of the vorticity equation with the form
\[\omega^{\vec{\kappa}}=1-I_{\cup_{i=1}^k A^{\vec{\kappa}}_i}, |A_i^{\vec{\kappa}}|=\kappa_i, A^{\vec{\kappa}}_i=\{x\in D\mid G\omega^{\vec{\kappa}}(x)= \mu^{\vec{\kappa}}_i\}\cap B_{r_0}(x_i),\]
where $\mu^{\vec{\kappa}}_i$ is a real positive number depending on $\vec{\kappa}$ and $r_0$ is a small positive number not depending on $\vec{\kappa}$. The velocity field
$\mathbf{v}=\nabla^\perp G\omega^{\vec{\kappa}}=0$ a.e. on each $A^{\vec{\kappa}}_i$.
Moreover, as $|\vec{\kappa}|\to0,$ each $A_i^{\vec{\kappa}}$ shrinks to $x_i$, that is, $A_i^{\vec{\kappa}}\subset B_{o(1)}(x_i)$.

\end{theorem}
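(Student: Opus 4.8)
The plan is to obtain $\omega^{\vec\kappa}$ as a constrained energy minimizer, in the spirit of Turkington's construction in \cite{T}, and then to estimate the associated Lagrange multipliers to show that the imposed support constraint is inactive. First I would fix $r_0>0$ small enough that the closed balls $\overline{B_{r_0}(x_i)}$, $i=1,\dots,k$, are pairwise disjoint, contained in $D$, and such that $x_i$ is the \emph{unique} maximum point of $\psi_0$ on $\overline{B_{r_0}(x_i)}$; this is possible since each $x_i$ is an isolated maximum. Writing $\omega=1-\zeta$ one has $E(1-\zeta)=\mathrm{const}+\mathcal E(\zeta)$ with $\mathcal E(\zeta)=\frac12\int_D\zeta\,G\zeta\,dx-\int_D\psi_0\,\zeta\,dx$, so I would minimize $\mathcal E$ over the convex admissible class
\[\mathcal N_{\vec\kappa}:=\Big\{\zeta\in L^\infty(D)\ \Big|\ 0\le\zeta\le1\text{ a.e.},\ \mathrm{supp}(\zeta)\subset\textstyle\bigcup_{i=1}^k\overline{B_{r_0}(x_i)},\ \int_{B_{r_0}(x_i)}\zeta\,dx=\kappa_i\ \text{for all }i\Big\},\]
which is nonempty, bounded and weak-$*$ closed in $L^\infty(D)$ once $|\vec\kappa|$ is small. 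Since $G$ sends bounded subsets of $L^\infty(D)$ into precompact subsets of $C^1(\overline D)$, the energy is weak-$*$ continuous on $\mathcal N_{\vec\kappa}$, so a minimizer $\zeta^{\vec\kappa}$ exists and I set $\omega^{\vec\kappa}:=1-\zeta^{\vec\kappa}$.

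Next I would establish that $\zeta^{\vec\kappa}$ is a characteristic function. As $\mathcal E$ is convex, the first-order optimality condition $\int_D(G\zeta^{\vec\kappa}-\psi_0)(\eta-\zeta^{\vec\kappa})\,dx\ge0$ for all $\eta\in\mathcal N_{\vec\kappa}$ holds; since $G\zeta^{\vec\kappa}-\psi_0=-G\omega^{\vec\kappa}$ and the constraints decouple over the balls, this means that on each $B_{r_0}(x_i)$ the function $\zeta^{\vec\kappa}$ maximizes $\eta\mapsto\int G\omega^{\vec\kappa}\eta$ subject to $0\le\eta\le1$, $\int\eta=\kappa_i$. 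By the bathtub principle, $\zeta^{\vec\kappa}$ coincides a.e.\ on $B_{r_0}(x_i)$ with the characteristic function of $\{G\omega^{\vec\kappa}>\mu_i^{\vec\kappa}\}$ except possibly for an intermediate value on the level set $\{G\omega^{\vec\kappa}=\mu_i^{\vec\kappa}\}$, where $\mu_i^{\vec\kappa}$ is a Lagrange multiplier. Because $\omega^{\vec\kappa}\in L^\infty(D)$ gives $G\omega^{\vec\kappa}\in W^{2,p}(D)$ for every $p$, both $\nabla G\omega^{\vec\kappa}$ and $\Delta G\omega^{\vec\kappa}=-\omega^{\vec\kappa}$ vanish a.e.\ on every level set of $G\omega^{\vec\kappa}$; hence $\omega^{\vec\kappa}=0$ a.e.\ on $\{G\omega^{\vec\kappa}=\mu_i^{\vec\kappa}\}$, which upgrades the bathtub description to $\zeta^{\vec\kappa}=I_{A_i^{\vec\kappa}}$ on $B_{r_0}(x_i)$ with $A_i^{\vec\kappa}:=\{\zeta^{\vec\kappa}=1\}\cap B_{r_0}(x_i)$, $|A_i^{\vec\kappa}|=\kappa_i$, and $\omega^{\vec\kappa}\equiv0$ on $\bigcup_iA_i^{\vec\kappa}$.

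The \textbf{main obstacle} is the estimate of the multipliers $\mu_i^{\vec\kappa}$, which is exactly what forces the support constraint to be inactive, so that the minimizer genuinely solves the Euler equations; this is the point where \cite{T} and the present setting differ, and the argument follows \cite{CW3}. Starting from the elementary bound $\|G\zeta\|_{L^\infty(D)}\le C|\vec\kappa|\log(1/|\vec\kappa|)$ for $\zeta\in\mathcal N_{\vec\kappa}$ (split $G$ into its logarithmic singularity and a bounded remainder and rearrange the mass into balls), one gets $G\omega^{\vec\kappa}=\psi_0-G\zeta^{\vec\kappa}\ge\psi_0-o(1)$ as $|\vec\kappa|\to0$. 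Given $\epsilon>0$, pick $\rho_\epsilon>0$ with $\psi_0>\psi_0(x_i)-\epsilon/2$ on $B_{\rho_\epsilon}(x_i)$; then $B_{\rho_\epsilon}(x_i)\subset\{G\omega^{\vec\kappa}>\psi_0(x_i)-\epsilon\}$ once $|\vec\kappa|$ is small, so $|\{G\omega^{\vec\kappa}>\psi_0(x_i)-\epsilon\}\cap B_{r_0}(x_i)|\ge\pi\rho_\epsilon^2>\kappa_i$, which forces $\mu_i^{\vec\kappa}\ge\psi_0(x_i)-\epsilon$; combined with the trivial bound $\mu_i^{\vec\kappa}=G\omega^{\vec\kappa}(x)\le\psi_0(x)\le\psi_0(x_i)$ for $x\in A_i^{\vec\kappa}$, this gives $\mu_i^{\vec\kappa}\to\psi_0(x_i)$. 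Since on $A_i^{\vec\kappa}$ one has $\psi_0\ge G\omega^{\vec\kappa}=\mu_i^{\vec\kappa}\ge\psi_0(x_i)-\epsilon$, it follows that $A_i^{\vec\kappa}\subset\{\psi_0\ge\psi_0(x_i)-\epsilon\}\cap\overline{B_{r_0}(x_i)}$. The delicacy is that $x_i$ is only assumed isolated (not non-degenerate), so one uses that these super-level sets shrink to $\{x_i\}$ as $\epsilon\to0$ rather than a quadratic lower bound; the cone condition $\vec\kappa\in\mathbb K^{\alpha}$ keeps all constants uniform. This yields the asserted shrinking $A_i^{\vec\kappa}\subset B_{o(1)}(x_i)$.

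Finally I would identify $A_i^{\vec\kappa}$ with a level set and verify the weak vorticity equation. By the shrinking, $A_i^{\vec\kappa}\subset\subset B_{r_0}(x_i)$, so the open set $U_i:=\{x\in B_{r_0}(x_i):G\omega^{\vec\kappa}(x)>\mu_i^{\vec\kappa}\}$ is compactly contained in $B_{r_0}(x_i)$; on $U_i$, $\omega^{\vec\kappa}=0$, so $G\omega^{\vec\kappa}$ is harmonic there, strictly exceeds $\mu_i^{\vec\kappa}$ inside, and equals $\mu_i^{\vec\kappa}$ on $\partial U_i$, which is impossible by the maximum principle unless $U_i=\emptyset$. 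Together with $\omega^{\vec\kappa}=0$ a.e.\ on $\{G\omega^{\vec\kappa}=\mu_i^{\vec\kappa}\}$ this gives $A_i^{\vec\kappa}=\{x\in D:G\omega^{\vec\kappa}(x)=\mu_i^{\vec\kappa}\}\cap B_{r_0}(x_i)$ and $G\omega^{\vec\kappa}\equiv\mu_i^{\vec\kappa}$ on $A_i^{\vec\kappa}$; in particular $\nabla G\omega^{\vec\kappa}=0$ a.e.\ on $\bigcup_iA_i^{\vec\kappa}$, so $\mathbf v=\nabla^\perp G\omega^{\vec\kappa}=0$ there. For \eqref{ws}, with $\phi\in C_c^\infty(D)$ write
\[\int_D\omega^{\vec\kappa}\,\nabla^\perp G\omega^{\vec\kappa}\cdot\nabla\phi\,dx=\int_D\nabla^\perp G\omega^{\vec\kappa}\cdot\nabla\phi\,dx-\int_{\bigcup_iA_i^{\vec\kappa}}\nabla^\perp G\omega^{\vec\kappa}\cdot\nabla\phi\,dx;\]
the first term is zero because $\nabla^\perp G\omega^{\vec\kappa}$ is divergence-free and $\phi$ has compact support, and the second is zero because $\nabla G\omega^{\vec\kappa}=0$ a.e.\ on $\bigcup_iA_i^{\vec\kappa}$. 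Hence $\omega^{\vec\kappa}$ is a steady vortex patch solution of the required form, and one then takes $\delta_0$ small enough to accommodate every smallness requirement above. Alternatively, once the support constraint is known to be inactive, the fact that the minimizer solves the equation follows from the perturbation argument of \cite{EM}.
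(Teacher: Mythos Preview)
Your variational framework matches the paper's: minimize $\mathcal E$ over the constrained class $\mathcal N_{\vec\kappa}$, derive the patch structure from first-order optimality, estimate the Lagrange multipliers to prove shrinking, identify the dead cores with level sets via the maximum principle, and conclude the weak vorticity equation.

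The paper, however, handles two sub-steps differently, and your shortcuts are worth noting. For the multiplier estimate, the paper argues by energy comparison: it proves an upper bound $\mathcal E(w^{\vec\kappa})\le -\sum_i\psi_0(x_i)\kappa_i+C|\vec\kappa|^{3/2}$ via an explicit competitor, a lower bound for each ``core energy'' $\int(Gw^{\vec\kappa}-\psi_0+\mu_i^{\vec\kappa})w_i^{\vec\kappa}\ge -C\kappa_i^{3/2}$ via a Sobolev embedding, and then combines these with the trivial upper bound $\mu_j^{\vec\kappa}<\psi_0(x_j)$ to extract the quantitative rate $\mu_i^{\vec\kappa}>\psi_0(x_i)-C|\vec\kappa|^{1/2}$; the cone condition enters precisely when one divides by $\kappa_i$. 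Your argument---bound $\|G\zeta^{\vec\kappa}\|_{L^\infty}$ directly and compare the measure of $\{G\omega^{\vec\kappa}>\psi_0(x_i)-\epsilon\}\cap B_{r_0}(x_i)$ with $\kappa_i$---is more elementary, yields only the qualitative $\mu_i^{\vec\kappa}\to\psi_0(x_i)$ (which is all the theorem needs), and in fact does not use $\vec\kappa\in\mathbb K^\alpha$ at all; your remark that the cone condition ``keeps all constants uniform'' is superfluous in your own proof. For the weak solution, the paper follows \cite{EM} and differentiates $E$ along the area-preserving flow generated by $\nabla^\perp\phi$; your direct computation, splitting $\omega^{\vec\kappa}=1-I_{\cup_iA_i^{\vec\kappa}}$ and using $\nabla G\omega^{\vec\kappa}=0$ a.e.\ on $\bigcup_iA_i^{\vec\kappa}$ together with $\int_D\nabla^\perp G\omega^{\vec\kappa}\cdot\nabla\phi=0$, is again shorter. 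Both routes are valid; the paper's arguments are more robust (they would survive settings where the vorticity is not identically zero on the patch and give a convergence rate for $\mu_i^{\vec\kappa}$), while yours exploit the specific ``dead core'' structure more directly.
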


\begin{remark}
Existence of isolated maximum points of $\psi_0$ is determined by the geometry of the domain $D$. In \cite{ML}, Makar-Limanov studied equation \eqref{psi0} and showed that if $D$ is bounded convex domain, then $\psi_0^{1/2}$ is strictly concave, therefore $\psi_0$ has a unique maximum point in $D$.
\end{remark}

\begin{remark}
It is easy to see that the corresponding stream function $\psi^{\vec{\kappa}}=G\omega^{\vec{\kappa}}$ satisfies the following elliptic problem with Heaviside nonlinearity
\begin{equation}\label{psi0}
\begin{cases}
  -\Delta \psi^{\vec{\kappa}}=1-I_{\cup_{i=1}^k \{x\in D\mid \psi^{\vec{\kappa}}= \mu^{\vec{\kappa}}_i\}\cap B_{r_0}(x_i)}, & x\in D, \\
  \psi^{\vec{\kappa}}=0, & x\in\partial D.
\end{cases}
\end{equation}

\end{remark}

\section{Proof of Theorem \ref{thm1}}

In this section, we give the proof of Theorem \ref{thm1}. The key point is to choose a suitable test function in \eqref{ws}.

\begin{proof}[Proof of Theorem \ref{thm1}]
First we exclude the possibility $x_i\in\partial D$ by contradiction. Suppose that $x_i\in \partial D$ for some index $i$. Denote $\nabla^\perp\psi_0(x_i)=\mathbf{b}$ for simplicity, then $|\mathbf{b}|>0$ by Hopf's lemma.

Since $\omega^{\vec{\kappa}}$ is a steady solution of the vorticity equation, we have for any $\phi\in C_c^\infty(D)$
\begin{equation}\label{3-1}
\int_D\omega^{\vec{\kappa}}(x)\nabla^\perp G\omega^{\vec{\kappa}}(x)\cdot\nabla\phi(x) dx=0.
\end{equation}
Now we choose $\phi$ as follows
\[\phi(x)=\chi(x)G\omega^{\vec{\kappa}}(x)\mathbf{b}\cdot x,\]
where $\chi$ is a smooth function satisfying $\chi\equiv 1$ near $x_i$ and $\chi\equiv 0$ near $x_j$ if $j\neq i$. Such $\chi$ can be constructed by using mollification technique. It is not hard to check that the $\phi$ defined above belongs to $H^1_0(D)$, so by Remark \ref{remark} it can be chosen as the test function.
Therefore we have
\begin{equation}\label{3-112}
\int_D\omega^{\vec{\kappa}}(x)\nabla^\perp G\omega^{\vec{\kappa}}(x)\cdot(\nabla \chi(x)G\omega^{\vec{\kappa}}(x)\mathbf{b}\cdot x+\chi(x)\nabla G\omega^{\vec{\kappa}}(x)\mathbf{b}\cdot x+\chi(x)G\omega^{\vec{\kappa}}(x)\mathbf{b}) dx=0.
\end{equation}
By the choice of $\chi$, we obtain
\begin{equation}\label{3-2}
\int_{A^{\vec{\kappa}}_i}\nabla^\perp G\omega^{\vec{\kappa}}(x)\cdot(\mathbf{b}G\omega^{\vec{\kappa}}(x)+\nabla G\omega^{\vec{\kappa}}(x)\mathbf{b}\cdot x) dx=0.
\end{equation}
Taking into account the fact $\nabla^\perp G\omega^{\vec{\kappa}}\cdot \nabla G\omega^{\vec{\kappa}}\equiv0$, we have
\begin{equation}\label{3-3}
\int_{A^{\vec{\kappa}}_i}\nabla^\perp G\omega^{\vec{\kappa}}(x)\cdot\mathbf{b}G\omega^{\vec{\kappa}}(x)dx=0.
\end{equation}
On the other hand, since $G\omega^{\vec{\kappa}}\to\psi_0$ in $C^1(\overline{D})$ by $L^p$ estimate and Sobolev embedding and $A^{\vec{\kappa}}_i\subset B_{o(1)}(x_i)$ as $|\vec{\kappa}|\to 0$, we have
\begin{equation}
\lim_{|\vec{\kappa}|\to 0}\sup_{x\in A^{\vec{\kappa}}_i}|\nabla^\perp G\omega^{\vec{\kappa}}(x)-\mathbf{b}|\to 0,
\end{equation}
 from which we deduce that
 \begin{equation}\label{bbbb}
 |\nabla^\perp G\omega^{\vec{\kappa}}(x)\cdot\mathbf{b}|\geq\frac{|\mathbf{b}|^2}{2},\,\,\forall\,x\in A^{\vec{\kappa}}_i
 \end{equation}
  if $|\vec{\kappa}|$ is sufficiently small.
Combining \eqref{3-3} and \eqref{bbbb} we get the following contradiction
 \begin{equation}\label{3-2}
0<\int_{A^{\vec{\kappa}}_i}\frac{|\mathbf{b}|^2}{2}G\omega^{\vec{\kappa}}dx\leq\int_{A^{\vec{\kappa}}_i}\nabla^\perp G\omega^{\vec{\kappa}}\cdot\mathbf{b}G\omega^{\vec{\kappa}}dx=0,
\end{equation}
where we used the fact that $G\omega^{\vec{\kappa}}>0$ in $D$ by strong maximum principle. Therefore we have proved that each $x_i$ must be an interior point of $D$.

Next we show that each $x_i$ must be a critical point of $\psi_0$. Denote $\nabla^\perp\psi_0(x_i)=\mathbf{c}_i.$ It suffices to prove $\mathbf{c}_i=0.$ To this end, we choose the test function in \eqref{3-1} as
\[\phi(x)=\vartheta(x)\mathbf{c}_i\cdot x,\]
 where $\vartheta\in C_c^\infty(D)$ satisfying $\vartheta\equiv 1 $ near $x_i$ and $\vartheta\equiv0 $ near $x_j$ for $j\neq i$.
Then we have
\[\int_{A^{\vec{\kappa}}_i}\nabla^\perp G\omega^{\vec{\kappa}}(x)\cdot\mathbf{c}_i dx=0,\]
or
\begin{equation}\label{3-4}
\frac{1}{\kappa_i}\int_{A^{\vec{\kappa}}_i}\nabla^\perp G\omega^{\vec{\kappa}}(x)\cdot\mathbf{c}_i dx=0.
\end{equation}
Since $G\omega^{\vec{\kappa}}\to \psi_0$ in $C^1(\overline{D})$ and $A^{\vec{\kappa}}_i\subset B_{o(1)}(x_i)$ as $|\vec{\kappa}|\to 0$, we have $G\omega^{\vec{\kappa}}\to\mathbf{c}_i$ uniformly on $A^{\vec{\kappa}}_i$. Taking into account \eqref{3-4} we obtain
\[0=\frac{1}{\kappa_i}\int_{A^{\vec{\kappa}}_i}\nabla^\perp G\omega^{\vec{\kappa}}(x)\cdot\mathbf{c}_i dx\to|\mathbf{c}_i|^2\]
as $|\vec{\kappa}|\to0,$ which verifies $\mathbf{c}_i=0.$

\end{proof}

\section{Proof of Theorem \ref{thm2}}
In this section, we give the proof of Theorem \ref{thm2}. As mentioned in Section 1, the basic idea is to solve a minimization problem for the kinetic energy of the fluid subject to some appropriate constraints for the vorticity and analyze the limiting behavior.

 For an ideal fluid with impermeability boundary condition, the kinetic energy is
 \[E(\omega)=\frac{1}{2}\int_D|\nabla^\perp G\omega(x)|^2dx=\frac{1}{2}\int_D|\nabla G\omega(x)|^2dx=\frac{1}{2}\int_D\int_DG(x,y)\omega(x)\omega(y)dxdy.\]
We consider the minimization of $E$ over the following admissible class
\begin{equation}\label{2000}\mathcal{K}^{\vec{\kappa}}(D)=\{\omega=1-w\mid w=\sum_{i=1}^kw_i,0\leq w_i\leq 1,\int_Dw_i(x)dx=\kappa_i,supp(w_i)\subset B_{r_0}(x_i)\},\end{equation}
where $r_0$ is chosen to be sufficiently small such that $x_i$ is the unique maximum point of $\psi_0$ on $\overline{B_{r_0}(x_i)}$ for each $i=1,\cdot\cdot\cdot,k.$

In the following, for convenience we define
\[\mathcal{N}^{\vec{\kappa}}(D)=\{w\in L^\infty(D)\mid w=\sum_{i=1}^kw_i,0\leq w_i\leq 1,\int_Dw_i(x)dx=\kappa_i,supp(w_i)\subset B_{r_0}(x_i)\}.\]
Then it is obvious that to minimize $E$ over $\mathcal{K}^{\vec{\kappa}}(D)$ is equivalent to minimize $\mathcal{E}$ over $\mathcal{N}^{\vec{\kappa}}(D),$ where
\[\mathcal{E}(w):=\frac{1}{2}\int_D\int_DG(x,y)w(x)w(y)dxdy-\int_D\psi_0(x)w(x)dx.\]

\subsection{Existence} First we study the existence of a minimizer and its profile.
\begin{lemma}
There exists a unique minimizer of $E$ over $\mathcal{K}^{\vec{\kappa}}(D).$
\end{lemma}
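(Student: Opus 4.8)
The plan is to obtain the minimizer by the direct method in the calculus of variations, working with the equivalent functional $\mathcal{E}$ over $\mathcal{N}^{\vec{\kappa}}(D)$. First I would check that $\mathcal{N}^{\vec{\kappa}}(D)$ is non-empty (the support balls $B_{r_0}(x_i)$ have positive measure, so one can certainly find $w_i$ with $0\le w_i\le 1$, $\int_D w_i = \kappa_i$ and $\mathrm{supp}(w_i)\subset B_{r_0}(x_i)$ provided $|\vec{\kappa}|<\delta_0$ is small; one also needs the $B_{r_0}(x_i)$ to be pairwise disjoint, which holds after shrinking $r_0$ since the $x_i$ are distinct) and that $\mathcal{E}$ is bounded below on it (the quadratic term $\tfrac12\iint G\, w\,w$ is non-negative because $G$ is a positive-definite kernel, and the linear term $\int\psi_0 w$ is bounded since $\|w\|_{L^1}=|\vec{\kappa}|$ and $\psi_0\in L^\infty$). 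Then I would take a minimizing sequence $w^{(n)}=\sum_i w_i^{(n)}$; since $0\le w_i^{(n)}\le 1$ and the supports are fixed, the sequence is bounded in $L^\infty(D)$, hence has a subsequence converging weakly-$*$ in $L^\infty$ (equivalently weakly in $L^2$) to some $w=\sum_i w_i$. The constraints $0\le w_i\le 1$, $\mathrm{supp}(w_i)\subset B_{r_0}(x_i)$ and $\int_D w_i = \kappa_i$ all pass to the weak-$*$ limit (the pointwise bounds and support conditions define a weakly-$*$ closed convex set, and the integral constraint is a continuous linear functional), so $w\in\mathcal{N}^{\vec{\kappa}}(D)$.

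The next step is lower semicontinuity of $\mathcal{E}$ along this sequence. The linear term $-\int_D\psi_0 w$ passes to the limit by weak convergence against $\psi_0\in L^2(D)$. For the quadratic term I would use that $G$ defines a compact, self-adjoint, positive operator $w\mapsto G w$ from $L^2(D)$ into $H_0^1(D)\hookrightarrow L^2(D)$: compactness upgrades the weak convergence $w^{(n)}\rightharpoonup w$ to strong convergence $Gw^{(n)}\to Gw$ in $L^2(D)$, and then $\iint G\,w^{(n)}w^{(n)} = \int_D (Gw^{(n)})\,w^{(n)} \to \int_D (Gw)\,w$ because we are pairing a strongly convergent factor with a weakly convergent one. (Alternatively, $\tfrac12\iint G\,w\,w = \tfrac12\|\nabla G w\|_{L^2}^2$ is a convex continuous functional of $w$ in the $H^{-1}$–$H^1_0$ duality, hence weakly lower semicontinuous; either route works.) Therefore $\mathcal{E}(w)\le \liminf_n \mathcal{E}(w^{(n)}) = \inf_{\mathcal{N}^{\vec{\kappa}}(D)}\mathcal{E}$, so $w$ is a minimizer, and translating back, $\omega = 1-w$ minimizes $E$ over $\mathcal{K}^{\vec{\kappa}}(D)$.

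For uniqueness I would argue by strict convexity. The set $\mathcal{N}^{\vec{\kappa}}(D)$ is convex. The functional $\mathcal{E}$ is convex since its quadratic part $\tfrac12\|\nabla Gw\|_{L^2}^2$ is convex (positive semidefinite form) and its linear part is affine. To get \emph{strict} convexity I need the quadratic form $w\mapsto \iint_{D\times D} G(x,y)w(x)w(y)\,dx\,dy = \|\nabla Gw\|_{L^2(D)}^2$ to be positive definite, i.e. to vanish only when $w=0$ in $L^2(D)$: if $\|\nabla Gw\|_{L^2}=0$ then $Gw$ is constant, and being in $H_0^1(D)$ it is $\equiv 0$, whence $w = -\Delta(Gw) = 0$. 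Thus $\mathcal{E}$ is strictly convex on the convex set $\mathcal{N}^{\vec{\kappa}}(D)$, so its minimizer is unique; equivalently, if $\omega^{(1)},\omega^{(2)}$ were two minimizers of $E$ over $\mathcal{K}^{\vec{\kappa}}(D)$, then $w^{(j)}=1-\omega^{(j)}$ are two minimizers of $\mathcal{E}$, forcing $w^{(1)}=w^{(2)}$. The main (mild) obstacle here is purely bookkeeping: verifying that $\mathcal{N}^{\vec{\kappa}}(D)$ is non-empty and that all three families of constraints are preserved under weak-$*$ limits — in particular keeping the per-component decomposition $w=\sum_i w_i$ well defined, which is automatic once the $B_{r_0}(x_i)$ are disjoint so that $w_i = w\,I_{B_{r_0}(x_i)}$. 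No genuinely hard analysis is needed beyond the compactness of $G$ on $L^2(D)$, which is standard elliptic regularity.
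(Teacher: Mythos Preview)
Your proposal is correct and follows essentially the same approach as the paper: existence via the direct method using weak-$*$ sequential compactness of $\mathcal{N}^{\vec{\kappa}}(D)$ together with weak-$*$ continuity of $\mathcal{E}$ (the paper cites this from \cite{CW3}, while you spell out the compactness-of-$G$ argument), and uniqueness via strict convexity of $\mathcal{E}$ on the convex set $\mathcal{N}^{\vec{\kappa}}(D)$. The paper verifies strict convexity by the explicit identity $\mathcal{E}(\theta w_1+(1-\theta)w_2)=\theta\mathcal{E}(w_1)+(1-\theta)\mathcal{E}(w_2)-\theta(1-\theta)E(w_1-w_2)$, which is exactly your positive-definiteness observation in expanded form.
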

\begin{proof}
As mentioned above, it suffices to prove that $\mathcal{E}$ attains its minimum over $\mathcal{N}^{\vec{\kappa}}(D)$.
Since $\mathcal{N}^{\vec{\kappa}}(D)$ is a sequentially compact subset of $L^\infty(D)$ in the weak star topology(see \cite{CW3}, Lemma 3.1 for example) and $\mathcal{E}$ is weakly star continuous(see \cite{CW3}, Lemma 3.2), the result can be easily obtained by the direct method of the calculus of variations.

As to uniqueness, we need only to use the fact that $\mathcal{N}^{\vec{\kappa}}(D)$ is a convex subset in $L^\infty(D)$ and $\mathcal{E}$ is a strict convex functional over $\mathcal{N}^{\vec{\kappa}}(D)$. In fact, for any $w_1,w_2\in\mathcal{N}^{\vec{\kappa}}(D)$ with $w_1\neq w_2$ and $\theta\in(0,1)$, we have
\begin{equation}\label{con}
\begin{split}
\mathcal{E}(\theta w_1+(1-\theta)w_2)=&E(\theta w_1+(1-\theta)w_2)-\int_D\psi_0(x)(\theta w_1(x)+(1-\theta)w_2(x))dx\\
=&\theta^2E(w_1)+(1-\theta)^2E(w_2)+\theta(1-\theta)\int_D\int_DG(x,y)w_1(x)w_2(y)dxdy\\
&-\int_D\psi_0(x)(\theta w_1(x)+(1-\theta)w_2(x))dx\\
=&\theta^2E(w_1)+(1-\theta)^2E(w_2)+{\theta(1-\theta)}(E(w_1)+E(w_2)-E(w_1-w_2)\\
&-\int_D\psi_0(x)(\theta w_1(x)+(1-\theta)w_2(x))dx\\
=&\theta E(w_1)+(1-\theta) E(w_2)-\int_D\psi_0(x)(\theta w_1(x)+(1-\theta)w_2(x))dx\\&-\theta(1-\theta)E(w_1-w_2)\\
=&\theta\mathcal{E}(w_1)+(1-\theta) \mathcal{E}(w_2)-\theta(1-\theta)E(w_1-w_2)\\
>&\theta\mathcal{E}(w_1)+(1-\theta) \mathcal{E}(w_2),
\end{split}
\end{equation}
where we used the symmetry of the Green's function
and \[\int_D\int_DG(x,y)(w_1(x)-w_2(x))(w_1(y)-w_2(y))dxdy=\int_D|\nabla (Gw_1-Gw_2)(x)|^2dx\geq0\]
by integration by parts.
\end{proof}

\begin{lemma}
Let $\omega^{\vec{\kappa}}$ be the minimizer of $E$ over $\mathcal{K}^{\vec{\kappa}}(D).$ Then $\omega^{\vec{\kappa}}$ has the form
\[\omega^{\vec{\kappa}}=1-\sum_{i=1}^kI_{\{G\omega^{\vec{\kappa}}\geq\mu^{\vec{\kappa}}_i\}\cap B_{r_0}(x_i)},\]
where each $\mu^{\vec{\kappa}}_i$ is a real number depending on ${\vec{\kappa}}.$
\end{lemma}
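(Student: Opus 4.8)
The plan is to characterize the minimizer $\omega^{\vec{\kappa}} = 1 - \sum_i w_i^{\vec{\kappa}}$ by exploiting the first-order optimality (variational inequality) at the minimizer, working one component $w_i$ at a time. Since the constraints decouple across the $k$ balls $B_{r_0}(x_i)$ (whose closures we may take disjoint, shrinking $r_0$ if needed), it suffices to fix an index $i$ and study variations supported in $B_{r_0}(x_i)$. Fix a competitor direction: for $w_i, \tilde w_i$ admissible components on $B_{r_0}(x_i)$ and $t \in [0,1]$, the function $w_i + t(\tilde w_i - w_i)$ is again admissible (the pointwise bound $0 \le \cdot \le 1$, the mass $\kappa_i$, and the support condition are all preserved by convex combination), so $\frac{d}{dt}\big|_{t=0^+} \mathcal{E}(w^{\vec{\kappa}} + t(\tilde w_i - w_i)) \ge 0$. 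Computing this derivative using $\mathcal{E}(w) = \frac12\int\int G w w - \int \psi_0 w$ and $G\omega^{\vec{\kappa}} = \psi_0 - Gw^{\vec{\kappa}}$ gives
\[
\int_{B_{r_0}(x_i)} \big(Gw^{\vec{\kappa}}(x) - \psi_0(x)\big)\,(\tilde w_i(x) - w_i(x))\,dx \ge 0,
\]
i.e. $\int_{B_{r_0}(x_i)} G\omega^{\vec{\kappa}}(x)\,(\tilde w_i(x) - w_i(x))\,dx \le 0$ for every admissible $\tilde w_i$.

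Next I would run the standard bathtub argument. The inequality says that, among all measurable functions on $B_{r_0}(x_i)$ with values in $[0,1]$ and fixed integral $\kappa_i$, the minimizer component $w_i$ \emph{maximizes} $\int_{B_{r_0}(x_i)} G\omega^{\vec{\kappa}} w_i$. By the bathtub principle (Lieb–Loss, \S1.14), the maximizer must be the characteristic function of a superlevel set of $G\omega^{\vec{\kappa}}$: there exists $\mu_i^{\vec{\kappa}} \in \mathbb{R}$ such that
\[
w_i = I_{\{G\omega^{\vec{\kappa}} > \mu_i^{\vec{\kappa}}\} \cap B_{r_0}(x_i)} \quad\text{a.e. on the set } \{G\omega^{\vec{\kappa}} \ne \mu_i^{\vec{\kappa}}\} \cap B_{r_0}(x_i),
\]
with $0 \le w_i \le 1$ on the level set $\{G\omega^{\vec{\kappa}} = \mu_i^{\vec{\kappa}}\}$. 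To upgrade this to $w_i = I_{\{G\omega^{\vec{\kappa}} \ge \mu_i^{\vec{\kappa}}\} \cap B_{r_0}(x_i)}$ exactly, I would show the level set $\{G\omega^{\vec{\kappa}} = \mu_i^{\vec{\kappa}}\}$ has zero Lebesgue measure: on that set $\Delta(G\omega^{\vec{\kappa}}) = 0$ a.e. (Stampacchia's lemma, since $\nabla$ of a Sobolev function vanishes a.e. on its level sets), but $-\Delta(G\omega^{\vec{\kappa}}) = \omega^{\vec{\kappa}} = 1 - w \in \{0\} \cup [0,1]$, and where $G\omega^{\vec{\kappa}} = \mu_i^{\vec{\kappa}}$ we would need $1 - w_i = 0$, i.e. $w_i = 1$, which is consistent; the cleaner route is to note that if $|\{G\omega^{\vec{\kappa}} = \mu_i^{\vec{\kappa}}\}| > 0$ then on that set $-\Delta(G\omega^{\vec{\kappa}}) = 0$ forces $w_i = 1$ there, so the value assigned by the bathtub principle on the level set is already the characteristic-function value, and in either case $w_i = I_{\{G\omega^{\vec{\kappa}} \ge \mu_i^{\vec{\kappa}}\}}$ after adjusting on a null set.

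Summing over $i$ and recalling $\omega^{\vec{\kappa}} = 1 - \sum_i w_i$ then yields the claimed form. The main obstacle I anticipate is the careful treatment of the level set $\{G\omega^{\vec{\kappa}} = \mu_i^{\vec{\kappa}}\}$: the bathtub principle only pins down $w_i$ off this set, and one must argue that either it is null or that the natural value there is forced by the PDE to be $1$ (hence agrees with the superlevel-set indicator). A secondary point needing care is the positivity $\mu_i^{\vec{\kappa}} > 0$ claimed in the theorem statement — this is \emph{not} established here and presumably follows in the subsequent asymptotic analysis once one knows $G\omega^{\vec{\kappa}} \to \psi_0$ in $C^1$ and $w_i$ concentrates near the maximum point $x_i$ where $\psi_0(x_i) > 0$; for this lemma only the real number $\mu_i^{\vec{\kappa}}$ is asserted. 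I would also remark that the a.e. vanishing of the velocity on each $A_i^{\vec{\kappa}} = \{G\omega^{\vec{\kappa}} = \mu_i^{\vec{\kappa}}\}\cap B_{r_0}(x_i)$ follows from the Stampacchia lemma exactly as in the $x_i \in \partial D$ argument of Theorem~\ref{thm1}, but that belongs to a later step rather than this lemma.
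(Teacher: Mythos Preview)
Your proposal is correct and follows essentially the same route as the paper. The only cosmetic difference is packaging: the paper derives the sup/inf inequality by hand via the perturbations $w^{\vec{\kappa}}+s(z_0-z_1)$ (which is precisely the proof of the bathtub principle), while you invoke the bathtub principle as a named result after establishing the linear variational inequality. The treatment of the level set $\{G\omega^{\vec{\kappa}}=\mu_i^{\vec{\kappa}}\}$ via the Stampacchia-type property of Sobolev functions (forcing $w_i=1$ there because $-\Delta G\omega^{\vec{\kappa}}=\omega^{\vec{\kappa}}=0$ a.e.\ on that set) is identical in both.
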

\begin{proof}
Notice that $w^{\vec{\kappa}}=1-\omega^{\vec{\kappa}}$ is a minimizer of $\mathcal{E}$ over $\mathcal{N}^{\vec{\kappa}}(D).$
For fixed $i$, we define a family of test functions $w_s^{\vec{\kappa}}=w^{\vec{\kappa}}+s(z_0-z_1),$ where $s>0$ is a parameter, $z_0,z_1$ satisfy
\begin{equation}
\begin{cases}
z_0,z_1\in L^\infty(D),\,z_0,z_1\geq 0,\text{ a.e. in } D,

 \\ \int_Dz_0(x)dx=\int_D z_1(x)dx,
 \\supp(z_0), supp(z_1)\subset B_{r_0}(x_i),
 \\z_0=0 \quad\text{in } D\setminus\{x\in D\mid w^{\vec{\kappa}}(x)\leq 1-\delta\},
 \\z_1=0 \quad\text{in } D\setminus\{x\in D\mid w^{\vec{\kappa}}(x)\geq\delta\}.
\end{cases}
\end{equation}
Here $\delta$ is a positive number. It is not difficult to check that for fixed $z_0,z_1$ and $\delta$, $w^{\vec{\kappa}}_s\in \mathcal{N}^{\vec{\kappa}}(D)$ provided that $s$ is sufficiently small. Since $w^{\vec{\kappa}}$ is a minimizer, we have
\[0\leq\frac{d\mathcal{E}(w_s^{\vec{\kappa}})}{ds}|_{s=0^+}=\int_Dz_0(x)(Gw^{\vec{\kappa}}(x)-\psi_0(x))dx
-\int_Dz_1(x)(Gw^{\vec{\kappa}}(x)-\psi_0(x))dx.\]
By the choice of $z_0$ and $z_1$ we obtain
\begin{equation}\label{3-100}
\sup_{\{x\in D\mid w^{\vec{\kappa}}(x)>0\}\cap{B_{r_0}(x_i)}}(Gw^{\vec{\kappa}}-\psi_0)\leq\inf_{\{x\in D\mid w^{\vec{\kappa}}(x)<1\}\cap{B_{r_0}(x_i)}}(Gw^{\vec{\kappa}}-\psi_0).
\end{equation}
Since $\overline{\{x\in D\mid w^{\vec{\kappa}}(x)>0\}\cap{B_{r_0}(x_i)}}\cap\overline{\{x\in D\mid w^{\vec{\kappa}}(x)<1\}\cap{B_{r_0}(x_i)}}\neq\varnothing$ and $Gw^{\vec{\kappa}}-\psi_0$ is continuous, \eqref{3-100} is in fact an equality, that is,
\begin{equation}
\sup_{\{x\in D\mid w^{\vec{\kappa}}(x)>0\}\cap{B_{r_0}(x_i)}}(Gw^{\vec{\kappa}}-\psi_0)=\inf_{\{x\in D\mid w^{\vec{\kappa}}(x)<1\}\cap{B_{r_0}(x_i)}}(Gw^{\vec{\kappa}}-\psi_0):=-\mu^{\vec{\kappa}}_i.
\end{equation}
Now it is easy to check that
\begin{equation}
\begin{cases}
w^{\vec{\kappa}}=0\text{\,\,\,\,\,\,a.e.\,} \text{on }\{x\in D\mid Gw^{\vec{\kappa}}(x)-\psi_0(x)>-\mu^{\vec{\kappa}}_i\}\cap B_{r_0}(x_i),
 \\ w^{\vec{\kappa}}=1\text{\,\,\,\,\,\,a.e.\,} \text{on }\{x\in D\mid Gw^{\vec{\kappa}}(x)-\psi_0(x)<-\mu^{\vec{\kappa}}_i\}\cap B_{r_0}(x_i).
\end{cases}
\end{equation}
On the level set $\{x\in D\mid Gw^{\vec{\kappa}}(x)-\psi_0(x)=-\mu^{\vec{\kappa}}_i\}\cap B_{r_0}(x_i)$,
by the property of Sobolev functions, we have $\nabla (Gw^{\vec{\kappa}}-\psi_0)=0\text{\,\,a.e.}$, therefore $w^{\vec{\kappa}}=-\Delta G\omega^\lambda=-\Delta\psi_0=1\text{\,\,a.e.}$.
Altogether, we obtain
\[w^{\vec{\kappa}}=\sum_{i=1}^k I_{\{x\in D\mid Gw^{\vec{\kappa}}(x)-\psi_0(x)\leq-\mu_i^{\vec{\kappa}}\}\cap B_{r_0}(x_i)}.\]
from which we deduce that
\[\omega^{\vec{\kappa}}=1-\sum_{i=1}^kI_{\{G\omega^{\vec{\kappa}}\geq \mu^{\vec{\kappa}}_i\}\cap B_{r_0}(x_i)}.\]

\end{proof}

From now on, for simplicity we denote $A^{\vec{\kappa}}_i=\{x\in D\mid G\omega^{\vec{\kappa}}(x)\geq\mu_i^{\vec{\kappa}}\}\cap B_{r_0}(x_i),$ which is called the ``dead core". We will see in the following that the velocity of the fluid vanishes on $A^{\vec{\kappa}}_i$ if $|\vec{\kappa}|$ is sufficiently small.

\subsection{Limiting behavior}
As in \cite{EM}, to show that the minimizer $\omega^{\vec{\kappa}}$ is a weak solution of the vorticity equation \eqref{235}, we need to show that each $A^{\vec{\kappa}}_i$ is away from $\partial B_{r_0}(x_i)$.
To this end, we need to analyze the limiting behavior of the minimizer as $|\vec{\kappa}|\to0.$

We begin with the following estimate of the upper bound of the energy. For simplicity, we will use $C$ to denote various positive numbers not depending on $\vec{\kappa}$, but possibly depending on $D$, $\alpha, r_0$ and $x_1,\cdot\cdot\cdot,x_k$.
\begin{lemma}\label{1-1-2}
$\mathcal{E}(w^{\vec{\kappa}})\leq-\sum_{i=1}^k\psi_0(x_i)\kappa_i+C|\vec{\kappa}|^{3/2}.$
\end{lemma}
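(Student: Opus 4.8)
The goal is an upper bound on the minimum of $\mathcal{E}$ over $\mathcal{N}^{\vec{\kappa}}(D)$, and the natural strategy is to exhibit an explicit competitor $\widetilde{w}\in\mathcal{N}^{\vec{\kappa}}(D)$ and estimate $\mathcal{E}(\widetilde{w})$. The obvious choice is to take $\widetilde{w}=\sum_{i=1}^k \widetilde{w}_i$ where $\widetilde{w}_i=I_{B_{\rho_i}(x_i)}$ is the characteristic function of a small disk centered at $x_i$, with radius $\rho_i$ chosen so that $|B_{\rho_i}(x_i)|=\pi\rho_i^2=\kappa_i$, i.e. $\rho_i=\sqrt{\kappa_i/\pi}$. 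Since $|\vec{\kappa}|<\delta_0$ is small and the $x_i$ are interior maximum points, for $|\vec{\kappa}|$ small these disks are disjoint, contained in $B_{r_0}(x_i)$, and $\widetilde{w}_i$ clearly satisfies all the constraints, so $\widetilde{w}\in\mathcal{N}^{\vec{\kappa}}(D)$ and $\mathcal{E}(w^{\vec{\kappa}})\le\mathcal{E}(\widetilde{w})$.

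\medskip
\noindent\textbf{Key steps.} Recall $\mathcal{E}(\widetilde{w})=\frac12\int_D\int_D G(x,y)\widetilde{w}(x)\widetilde{w}(y)\,dx\,dy-\int_D\psi_0(x)\widetilde{w}(x)\,dx$. I would estimate the two terms separately. For the linear term, on each disk $\psi_0(x)=\psi_0(x_i)+O(|x-x_i|)$ (in fact $O(|x-x_i|^2)$ since $x_i$ is a critical point, by Theorem \ref{thm1} and smoothness of $\psi_0$), so
\[
\int_D\psi_0\widetilde{w}\,dx=\sum_{i=1}^k\int_{B_{\rho_i}(x_i)}\psi_0(x)\,dx=\sum_{i=1}^k\psi_0(x_i)\kappa_i+O\Big(\sum_i \rho_i^2\cdot\kappa_i\Big)=\sum_{i=1}^k\psi_0(x_i)\kappa_i+O(|\vec{\kappa}|^2),
\]
which is more than enough. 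The quadratic term is the one that produces the $|\vec{\kappa}|^{3/2}$ error. Split $E(\widetilde{w})$ into diagonal contributions $\frac12\int\int_{B_{\rho_i}\times B_{\rho_i}}G$ and off-diagonal ones $\int\int_{B_{\rho_i}\times B_{\rho_j}}G$, $i\ne j$. The off-diagonal terms are bounded: since $|x_i-x_j|$ is bounded below, $G(x,y)$ is bounded on $B_{\rho_i}(x_i)\times B_{\rho_j}(x_j)$ for $|\vec{\kappa}|$ small, giving a contribution $O(\kappa_i\kappa_j)=O(|\vec{\kappa}|^2)$. For the diagonal term, write $G(x,y)=\frac{1}{2\pi}\log\frac{1}{|x-y|}+h(x,y)$ with $h$ the (smooth, bounded) regular part; the $h$-contribution is $O(\kappa_i^2)=O(|\vec{\kappa}|^2)$, and the singular part is
\[
\frac{1}{4\pi}\int_{B_{\rho_i}(x_i)}\int_{B_{\rho_i}(x_i)}\log\frac{1}{|x-y|}\,dx\,dy
=\frac{\kappa_i^2}{4\pi}\log\frac{1}{\rho_i}+O(\kappa_i^2)
=O\big(\kappa_i^2\log(1/\kappa_i)\big),
\]
by the standard computation of the logarithmic self-energy of a uniform disk (rescale $x=x_i+\rho_i\xi$, $y=x_i+\rho_i\eta$). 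Since $\kappa_i^2\log(1/\kappa_i)=o(|\vec{\kappa}|^{3/2})$ as $|\vec{\kappa}|\to0$, all error terms are absorbed into $C|\vec{\kappa}|^{3/2}$, and collecting everything gives $\mathcal{E}(w^{\vec{\kappa}})\le\mathcal{E}(\widetilde{w})\le-\sum_{i=1}^k\psi_0(x_i)\kappa_i+C|\vec{\kappa}|^{3/2}$.

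\medskip
\noindent\textbf{Main obstacle.} The only genuinely delicate point is controlling the logarithmic self-interaction of each disk and verifying it is $o(|\vec{\kappa}|^{3/2})$; everything else is a bounded-kernel estimate. Here one should be a little careful that the bound is uniform over $\vec{\kappa}\in\mathbb{K}^\alpha$ — but the uniform-cone condition forces each $\kappa_i$ to be comparable to $|\vec{\kappa}|/k$, so $\kappa_i\le|\vec{\kappa}|$ and $\kappa_i^2\log(1/\kappa_i)\le C|\vec{\kappa}|^2\log(1/|\vec{\kappa}|)\le C|\vec{\kappa}|^{3/2}$ for $|\vec{\kappa}|$ small, and the constant $C$ depends only on $D,\alpha,r_0$ and the points $x_i$, as claimed. (In fact the stated exponent $3/2$ is not sharp — one could even write $-\sum\psi_0(x_i)\kappa_i+C|\vec{\kappa}|^2\log(1/|\vec{\kappa}|)$ — but $3/2$ is all that is needed later, so I would not chase the optimal error.)
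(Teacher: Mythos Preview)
Your proposal is correct and uses the same competitor as the paper (characteristic functions of disks of area $\kappa_i$ centered at the $x_i$). The only substantive difference is how you bound the quadratic term $\frac12\int_D G\widetilde{w}\,\widetilde{w}\,dx$: you split the Green's function into its logarithmic singularity and regular part and compute the self-energy of a disk directly, obtaining the sharper error $O(|\vec{\kappa}|^2\log(1/|\vec{\kappa}|))$; the paper instead uses the one-line elliptic estimate
\[
\left|\tfrac12\int_D G\widetilde{w}\,\widetilde{w}\,dx\right|
\le \tfrac12\,|G\widetilde{w}|_{L^\infty(D)}\,|\vec{\kappa}|
\le C\,|G\widetilde{w}|_{W^{2,2}(D)}\,|\vec{\kappa}|
\le C\,|\widetilde{w}|_{L^2(D)}\,|\vec{\kappa}|
= C\,|\vec{\kappa}|^{3/2},
\]
which avoids any kernel decomposition. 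Your approach buys a better exponent (which, as you note, is not needed later); the paper's buys brevity. For the linear term the paper does not exploit that $x_i$ is a critical point and only uses $\psi_0\in C^1(\overline{D})$, getting $O(|\vec{\kappa}|^{3/2})$ there as well, whereas your Taylor-expansion argument gives $O(|\vec{\kappa}|^2)$.
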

\begin{proof}
 We define a family of test functions $v^{\vec{\kappa}}=\sum_{i=1}^kI_{B_{\varepsilon_i}(x_i)}$, where $\varepsilon_i=\sqrt{\kappa_i/\pi}$.  It is obvious that $v^{\vec{\kappa}}\in\mathcal{N}^{\vec{\kappa}}(D)$ if $|{\vec{\kappa}}|$ is sufficiently small. Therefore we have $\mathcal{E}(w^{\vec{\kappa}})\leq\mathcal{E}(v^{\vec{\kappa}})$.

Now we calculate $\mathcal{E}(v^{\vec{\kappa}})$ as follows. Recall that
\begin{equation}\label{3-99}
\mathcal{E}(v^{\vec{\kappa}})=\frac{1}{2}\int_DGv^{\vec{\kappa}}(x)v^{\vec{\kappa}}(x)dx
-\int_D\psi_0(x)v^{\vec{\kappa}}(x)dx.\end{equation}
For the first term, by Sobolev embedding theorem and $L^p$ estimate we have
\begin{equation}\label{3-101}
\begin{split}
\left|\frac{1}{2}\int_DGv^{\vec{\kappa}}(x)v^{\vec{\kappa}}(x)dx\right|
\leq \frac{1}{2}|Gv^{\vec{\kappa}}|_{L^\infty(D)}|\vec{\kappa}|
\leq C|Gv^{\vec{\kappa}}|_{W^{2,2}(D)}|\vec{\kappa}|
\leq C|v^{\vec{\kappa}}|_{L^2(D)}|\vec{\kappa}|
\leq C|\vec{\kappa}|^{3/2}.
\end{split}
\end{equation}
For the second term,
\begin{equation}\label{3-102}
\begin{split}
\int_D\psi_0(x)v^{\vec{\kappa}}(x)dx=&\sum_{i=1}^k\int_{B_{\varepsilon}(x_i)}\psi_0(x)dx\\
=&\sum_{i=1}^k\int_{B_{\varepsilon_i}(x_i)}(\psi_0(x)-\psi_0(x_i))dx+\sum_{i=1}^k\int_{B_{\varepsilon_i}(x_i)}\psi_0(x_i)dx\\
=&\sum_{i=1}^k\int_{B_{\varepsilon_i}(x_i)}(\psi_0(x)-\psi_0(x_i))dx+\sum_{i=1}^k\kappa_i\psi_0(x_i).
\end{split}
\end{equation}
Since $\psi_0\in C^1(\overline{D})$, we have
\begin{equation}\label{3-103}
\begin{split}
\left|\int_{B_{\varepsilon_i}(x_i)}(\psi_0(x)-\psi_0(x_i))dx\right|&\leq \int_{B_{\varepsilon_i}(x_i)}\left|\psi_0(x)-\psi_0(x_i)\right|dx\\
&\leq \int_{B_{\varepsilon_i}(x_i)}|\nabla\psi_0|_{L^\infty(D)}|x-x_i|dx\\
&\leq \varepsilon_i |\nabla\psi_0|_{L^\infty(D)}\kappa_i\\
&\leq C|\vec{\kappa}|^{3/2}.
\end{split}
\end{equation}
The desired result follows from \eqref{3-99},\eqref{3-101},\eqref{3-102} and \eqref{3-103}.

\end{proof}
 As in \cite{T}, we also need the lower bound of the kinetic energy of each ``dead core" $A_i^{\vec{\kappa}}$.
\begin{lemma}\label{1-1-1}
For each fixed index $i$, we have
\[\int_D(Gw^{\vec{\kappa}}(x)-\psi_0(x)+\mu^{\vec{\kappa}}_i)w^{\vec{\kappa}}_i(x)dx\geq-C{\kappa}_i^{{3}/{2}}.\]
\end{lemma}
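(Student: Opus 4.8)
The plan is to reduce the inequality to a purely geometric estimate for the super-level set $A_i^{\vec{\kappa}}$, using only the profile of the minimizer together with the regularity $G\omega^{\vec{\kappa}}\to\psi_0$ in $C^1(\overline D)$, rather than any further energy comparison. First I would rewrite the left-hand side. Recall that $w_i^{\vec{\kappa}}=I_{A_i^{\vec{\kappa}}}$ with $A_i^{\vec{\kappa}}=\{x\in D\mid G\omega^{\vec{\kappa}}(x)\ge\mu_i^{\vec{\kappa}}\}\cap B_{r_0}(x_i)$, and that $Gw^{\vec{\kappa}}-\psi_0=-G\omega^{\vec{\kappa}}$ since $\omega^{\vec{\kappa}}=1-w^{\vec{\kappa}}$ and $G$ sends the constant $1$ to $\psi_0$. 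Hence
\[\int_D\bigl(Gw^{\vec{\kappa}}(x)-\psi_0(x)+\mu_i^{\vec{\kappa}}\bigr)w_i^{\vec{\kappa}}(x)\,dx=-\int_{A_i^{\vec{\kappa}}}\bigl(G\omega^{\vec{\kappa}}(x)-\mu_i^{\vec{\kappa}}\bigr)\,dx,\]
and since $G\omega^{\vec{\kappa}}\ge\mu_i^{\vec{\kappa}}$ on $A_i^{\vec{\kappa}}$ the integrand on the right is nonnegative; so the lemma is equivalent to $\int_{A_i^{\vec{\kappa}}}(G\omega^{\vec{\kappa}}-\mu_i^{\vec{\kappa}})\,dx\le C\kappa_i^{3/2}$. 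Bounding that integrand by $|A_i^{\vec{\kappa}}|=\kappa_i$ times its supremum, it suffices to prove the oscillation estimate $M_i-\mu_i^{\vec{\kappa}}\le C\kappa_i^{1/2}$, where $M_i:=\max_{\overline{B_{r_0}(x_i)}}G\omega^{\vec{\kappa}}$.

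The heart of the matter is this oscillation estimate. Since $0\le\omega^{\vec{\kappa}}\le1$, the $L^p$ estimate and Sobolev embedding give a bound $\|\nabla G\omega^{\vec{\kappa}}\|_{L^\infty(D)}\le L$ that is uniform in $\vec{\kappa}$, and in particular $G\omega^{\vec{\kappa}}\to\psi_0$ in $C^0(\overline D)$; since $x_i$ is the unique maximum point of $\psi_0$ on $\overline{B_{r_0}(x_i)}$, for $|\vec{\kappa}|$ small the maximum $M_i$ is attained at some $z_{\vec{\kappa}}\in B_{r_0/2}(x_i)$. Put $\rho:=\min\{(M_i-\mu_i^{\vec{\kappa}})/L,\,r_0/2\}$. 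Then $B_\rho(z_{\vec{\kappa}})\subset B_{r_0}(x_i)$, and the Lipschitz bound gives $G\omega^{\vec{\kappa}}\ge M_i-L\rho\ge\mu_i^{\vec{\kappa}}$ throughout $B_\rho(z_{\vec{\kappa}})$, hence $B_\rho(z_{\vec{\kappa}})\subset A_i^{\vec{\kappa}}$ and $\pi\rho^2\le|A_i^{\vec{\kappa}}|=\kappa_i$. For $\kappa_i<\pi r_0^2/4$ the value $\rho=r_0/2$ is impossible, so $\rho=(M_i-\mu_i^{\vec{\kappa}})/L$, and therefore $M_i-\mu_i^{\vec{\kappa}}\le L\sqrt{\kappa_i/\pi}$. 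Combining this with the reduction of the first paragraph gives $\int_{A_i^{\vec{\kappa}}}(G\omega^{\vec{\kappa}}-\mu_i^{\vec{\kappa}})\,dx\le L\pi^{-1/2}\kappa_i^{3/2}$, which is the asserted bound with $C=L\pi^{-1/2}$.

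The step I expect to demand the most care is ensuring that the disc $B_\rho(z_{\vec{\kappa}})$ genuinely lies inside $B_{r_0}(x_i)$: this is precisely where the strict interior maximality of $x_i$ enters, forcing $z_{\vec{\kappa}}$ to stay at distance $\ge r_0/2$ from $\partial B_{r_0}(x_i)$ once $|\vec{\kappa}|$ is small, after which the truncation of $\rho$ at $r_0/2$ closes the argument. I note that one could alternatively follow Turkington and test the minimality of $w^{\vec{\kappa}}$ against the competitor $w^{\vec{\kappa}}-w_i^{\vec{\kappa}}+I_{B_{\varepsilon_i}(x_i)}$ with $\varepsilon_i=\sqrt{\kappa_i/\pi}$, using the uniform-cone condition defining $\mathbb{K}^{\alpha}$ to absorb the resulting cross terms into $O(\kappa_i^{3/2})$; but that route still needs the oscillation estimate above to control $\mu_i^{\vec{\kappa}}-\psi_0(x_i)$, so the direct argument seems preferable.
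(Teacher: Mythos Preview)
Your argument is correct and takes a genuinely different route from the paper. The paper follows Turkington's pattern: writing $\zeta^{\vec{\kappa}}=Gw^{\vec{\kappa}}-\psi_0+\mu_i^{\vec{\kappa}}$, it applies H\"older to get $\int_{A_i^{\vec{\kappa}}}\zeta^{\vec{\kappa}}\ge-\kappa_i^{1/2}\|\zeta^{\vec{\kappa}}\|_{L^2(A_i^{\vec{\kappa}})}$, then controls $\|\zeta^{\vec{\kappa}}_-\|_{L^2}$ via the Sobolev embedding $W^{1,1}(B_{r_0}(x_i))\hookrightarrow L^2(B_{r_0}(x_i))$; since $\zeta^{\vec{\kappa}}_-$ is supported on $A_i^{\vec{\kappa}}$, the $L^1$ norms of $\zeta^{\vec{\kappa}}_-$ and $\nabla\zeta^{\vec{\kappa}}_-$ pick up an extra factor $\kappa_i^{1/2}$ by H\"older, and after absorbing one obtains $\|\zeta^{\vec{\kappa}}\|_{L^2(A_i^{\vec{\kappa}})}\le C\kappa_i^{1/2}\|\nabla\zeta^{\vec{\kappa}}\|_{L^2(A_i^{\vec{\kappa}})}\le C\kappa_i$. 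You instead bypass the Sobolev machinery entirely and go straight for the pointwise oscillation bound $\sup_{A_i^{\vec{\kappa}}}(G\omega^{\vec{\kappa}}-\mu_i^{\vec{\kappa}})\le L\sqrt{\kappa_i/\pi}$ by the elementary observation that the uniform Lipschitz bound forces a disc of radius $(M_i-\mu_i^{\vec{\kappa}})/L$ around the maximum point to lie in $A_i^{\vec{\kappa}}$. Your approach is shorter, avoids any function-space embedding, and actually delivers a stronger $L^\infty$ statement than the integrated $L^2$ bound the paper uses; the paper's approach, on the other hand, is closer to Turkington's template and would transfer more readily to situations where a uniform $C^1$ bound on $G\omega^{\vec{\kappa}}$ is not available. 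Your caveat about placing $z_{\vec{\kappa}}$ in $B_{r_0/2}(x_i)$ is exactly the right point to flag, and the $C^0$ convergence $G\omega^{\vec{\kappa}}\to\psi_0$ together with the isolated-maximum hypothesis handles it cleanly.
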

\begin{proof}
Recall that $w_i^{\vec{\kappa}}=I_{A_i^{\vec{\kappa}}}$, where $A^{\vec{\kappa}}_i=\{x\in D\mid Gw^{\vec{\kappa}}(x)-\psi_0(x)+\mu^{\vec{\kappa}}_i\leq0\}\cap B_{r_0}(x_i)$. Denote $\zeta^{\vec{\kappa}}=Gw^{\vec{\kappa}}-\psi_0+\mu^{\vec{\kappa}}_i$. By H\"older's inequality, we have
\begin{equation}\label{3-201}
\int_D\zeta^{\vec{\kappa}}(x)w_i^{\vec{\kappa}}(x)dx=\int_{A^{\vec{\kappa}}_i}\zeta^{\vec{\kappa}}(x)dx\geq
-|A^{\vec{\kappa}}_i|^{1/2}\left(\int_{A^{\vec{\kappa}}_i}|\zeta^{\vec{\kappa}}(x)|^2dx\right)^{1/2}
=-\kappa_i^{1/2}|\zeta^{\vec{\kappa}}|_{L^2(A^{\vec{\kappa}}_i)}.
\end{equation}
On the other hand, by Sobolev embedding $W^{1,1}(B_{r_0}(x_i))\hookrightarrow L^2(B_{r_0}(x_i))$, we have
\begin{equation}\label{3-202}
\begin{split}
|\zeta^{\vec{\kappa}}|_{L^2(A^{\vec{\kappa}}_i)}= &|\zeta_{-}^{\vec{\kappa}}|_{L^2(B_{r_0}(x_i))}\\
\leq & C\left(\int_{B_{r_0}(x_i)}|\zeta_{-}^{\vec{\kappa}}(x)|dx+\int_{B_{r_0}(x_i)}|\nabla\zeta^{\vec{\kappa}}_{-}(x)|dx\right)\\
\leq &C\kappa^{1/2}_i |\zeta^{\vec{\kappa}}|_{L^2(A^{\vec{\kappa}}_i)}+C\kappa^{1/2}_i |\nabla\zeta^{\vec{\kappa}}|_{L^2(A^{\vec{\kappa}}_i)}.
\end{split}
\end{equation}
When $|\vec{\kappa}|$ is sufficiently small, we deduce from \eqref{3-202} that
\begin{equation}\label{3-203}
\begin{split}
|\zeta^{\vec{\kappa}}|_{L^2(A^{\vec{\kappa}}_i)}\leq C\kappa^{1/2}_i |\nabla\zeta^{\vec{\kappa}}|_{L^2(A^{\vec{\kappa}}_i)}.
\end{split}
\end{equation}
Combining \eqref{3-201} and \eqref{3-203}, we have
\begin{equation}\label{3-204}
\begin{split}
\int_D\zeta^{\vec{\kappa}}(x)w_i^{\vec{\kappa}}(x)dx&\geq-\kappa_i^{1/2}|\zeta^{\vec{\kappa}}|_{L^2(A^{\vec{\kappa}}_i)}\\
&\geq -C\kappa_i |\nabla\zeta^{\vec{\kappa}}|_{L^2(A^{\vec{\kappa}}_i)}\\
&\geq -C\kappa_i (|\nabla Gw^{\vec{\kappa}}|_{L^2(A^{\vec{\kappa}}_i)}+|\nabla\psi_0|_{L^2(A^{\vec{\kappa}}_i)})\\
&\geq -C\kappa_i^{3/2}(|\nabla Gw^{\vec{\kappa}}|_{L^\infty(A^{\vec{\kappa}}_i)}+|\nabla\psi_0|_{L^\infty(A^{\vec{\kappa}}_i)})\\
&\geq-C\kappa_i^{3/2},
\end{split}
\end{equation}
which is the desired result.
\end{proof}

Now we begin to estimate the Lagrange multiplier $\mu^{\vec{\kappa}}_i$, which is the key ingredient of the proof. This is somewhat different from \cite{T}.

 The upper bound of each $\mu^{\vec{\kappa}}_i$ is as follows.
\begin{lemma}\label{1-1-3}
$\mu^{\vec{\kappa}}_i<\psi_0(x_i).$
\end{lemma}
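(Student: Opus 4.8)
The plan is to prove the upper bound $\mu^{\vec{\kappa}}_i < \psi_0(x_i)$ by combining the energy upper bound from Lemma \ref{1-1-2}, the ``dead core'' lower bounds from Lemma \ref{1-1-1}, and a lower bound for $\mathcal{E}(w^{\vec{\kappa}})$ obtained by testing the variational characterization of the minimizer against itself. The starting observation is that $w^{\vec{\kappa}}$ minimizes the convex functional $\mathcal{E}$, so evaluating at $\theta w^{\vec{\kappa}}$ for $\theta$ near $1$ (or more directly exploiting the Euler--Lagrange structure) shows that the minimizer sees the quantity $Gw^{\vec{\kappa}} - \psi_0$ through the threshold values $-\mu^{\vec{\kappa}}_i$; concretely, on the support of $w^{\vec{\kappa}}_i$ one has $Gw^{\vec{\kappa}} - \psi_0 \le -\mu^{\vec{\kappa}}_i$. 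First I would write $\mathcal{E}(w^{\vec{\kappa}})$ out using bilinearity of $E$: since $w^{\vec{\kappa}} = \sum_i w^{\vec{\kappa}}_i$,
\[
\mathcal{E}(w^{\vec{\kappa}}) = \frac{1}{2}\int_D Gw^{\vec{\kappa}}(x)\,w^{\vec{\kappa}}(x)\,dx - \int_D \psi_0(x)\,w^{\vec{\kappa}}(x)\,dx
= \frac12\sum_{i=1}^k \int_D \big(Gw^{\vec{\kappa}} - \psi_0\big)w^{\vec{\kappa}}_i\,dx - \frac12\sum_{i=1}^k\int_D \psi_0\, w^{\vec{\kappa}}_i\,dx.
\]
The second sum is exactly $-\frac12\sum_i \int_D \psi_0 w^{\vec{\kappa}}_i$, which by the same argument as in \eqref{3-102}--\eqref{3-103} equals $-\frac12\sum_i \kappa_i\psi_0(x_i) + O(|\vec{\kappa}|^{3/2})$ because $\mathrm{supp}(w^{\vec\kappa}_i)\subset B_{r_0}(x_i)$ and $A^{\vec\kappa}_i$ shrinks to $x_i$. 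Wait — I cannot yet use that $A^{\vec\kappa}_i$ shrinks to $x_i$ (that is proved later using this very lemma); instead I will only use $\mathrm{supp}(w^{\vec\kappa}_i)\subset B_{r_0}(x_i)$, giving $-\int_D\psi_0 w^{\vec\kappa}_i \ge -\kappa_i\max_{\overline{B_{r_0}(x_i)}}\psi_0 = -\kappa_i\psi_0(x_i)$ by the choice of $r_0$; this inequality is enough for the first half of the argument.

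Next I would insert the Lagrange identity $Gw^{\vec{\kappa}} - \psi_0 + \mu^{\vec{\kappa}}_i \le 0$ on $A^{\vec\kappa}_i = \mathrm{supp}(w^{\vec\kappa}_i)$. Writing $\int_D(Gw^{\vec\kappa}-\psi_0)w^{\vec\kappa}_i\,dx = \int_D(Gw^{\vec\kappa}-\psi_0+\mu^{\vec\kappa}_i)w^{\vec\kappa}_i\,dx - \mu^{\vec\kappa}_i\kappa_i$ and applying Lemma \ref{1-1-1} to the first term, I obtain
\[
\int_D \big(Gw^{\vec{\kappa}} - \psi_0\big)w^{\vec{\kappa}}_i\,dx \ge -C\kappa_i^{3/2} - \mu^{\vec{\kappa}}_i\kappa_i.
\]
Summing over $i$ and combining with the expansion of $\mathcal{E}(w^{\vec\kappa})$ above yields a lower bound
\[
\mathcal{E}(w^{\vec{\kappa}}) \ge -\frac12\sum_{i=1}^k \mu^{\vec{\kappa}}_i\kappa_i - \sum_{i=1}^k \psi_0(x_i)\kappa_i - C|\vec{\kappa}|^{3/2}
\]
(using $-\frac12\sum_i\int\psi_0 w^{\vec\kappa}_i \ge -\frac12\sum_i\psi_0(x_i)\kappa_i$, which combines with the $-\frac12\sum_i\psi_0(x_i)\kappa_i$ coming from the first sum — here I should be careful: actually the cleaner route is to keep $-\int_D\psi_0 w^{\vec\kappa}_i$ intact and use Lemma \ref{1-1-1} only where it is genuinely needed). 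Comparing this with Lemma \ref{1-1-2}, namely $\mathcal{E}(w^{\vec\kappa}) \le -\sum_i\psi_0(x_i)\kappa_i + C|\vec{\kappa}|^{3/2}$, cancels the common $-\sum_i\psi_0(x_i)\kappa_i$ and leaves $\sum_i \mu^{\vec\kappa}_i\kappa_i \le C|\vec{\kappa}|^{3/2}$, hence a weak \emph{upper} bound on a weighted average of the multipliers; combined with the $\alpha$-uniform cone condition this gives $\mu^{\vec\kappa}_i \le C|\vec\kappa|^{1/2}$, which for $|\vec\kappa|$ small is strictly less than $\psi_0(x_i) > 0$.

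The step I expect to be the main obstacle — and where the argument must be arranged most carefully — is extracting the \emph{individual} bound $\mu^{\vec\kappa}_i < \psi_0(x_i)$ rather than merely a bound on the weighted sum $\sum_i\mu^{\vec\kappa}_i\kappa_i$: since $E$ couples all the cores through the Green's function, one must check that the cross terms $\int_D\int_D G(x,y)w^{\vec\kappa}_i(x)w^{\vec\kappa}_j(y)\,dx\,dy$ for $i\ne j$ are lower order (they are, being bounded by $C\kappa_i\kappa_j \le C|\vec\kappa|^2$ since $G$ is bounded away from the diagonal on disjoint balls $B_{r_0}(x_i)$, $B_{r_0}(x_j)$ — provided $r_0$ is small enough that these balls are disjoint, which I would add to the hypotheses on $r_0$), and that each $\mu^{\vec\kappa}_i$ is bounded below (which follows since $Gw^{\vec\kappa}-\psi_0$ is uniformly bounded on $\overline D$ by $L^p$ estimates, so $\mu^{\vec\kappa}_i \ge -\|Gw^{\vec\kappa}-\psi_0\|_{L^\infty} \ge -C$). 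Once one knows each $\mu^{\vec\kappa}_i \ge -C$, the inequality $\sum_i\mu^{\vec\kappa}_i\kappa_i \le C|\vec\kappa|^{3/2}$ together with $\kappa_i \ge |\vec\kappa|/(k\alpha)$ (from the cone condition) forces $\mu^{\vec\kappa}_i \le C|\vec\kappa|^{1/2}$ for every $i$, and since $\psi_0(x_i) > 0$ is a fixed positive constant, choosing $|\vec\kappa|$ small completes the proof.
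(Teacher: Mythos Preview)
Your approach contains a genuine sign error that makes the whole argument collapse. Let me trace it. From the decomposition
\[
\mathcal{E}(w^{\vec\kappa}) = \tfrac12\sum_i\int_D(Gw^{\vec\kappa}-\psi_0)w^{\vec\kappa}_i\,dx - \tfrac12\sum_i\int_D\psi_0\,w^{\vec\kappa}_i\,dx,
\]
Lemma~\ref{1-1-1} and the bound $\int_D\psi_0 w^{\vec\kappa}_i\,dx\le \kappa_i\psi_0(x_i)$ give
\[
\mathcal{E}(w^{\vec\kappa}) \ge -\tfrac12\sum_i\mu^{\vec\kappa}_i\kappa_i - \tfrac12\sum_i\psi_0(x_i)\kappa_i - C|\vec\kappa|^{3/2}
\]
(note the coefficient on the $\psi_0$ term is $\tfrac12$, not $1$ as you wrote). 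Comparing with Lemma~\ref{1-1-2} yields
\[
-\tfrac12\sum_i\mu^{\vec\kappa}_i\kappa_i - \tfrac12\sum_i\psi_0(x_i)\kappa_i \le -\sum_i\psi_0(x_i)\kappa_i + C|\vec\kappa|^{3/2},
\]
i.e.\ $\sum_i\mu^{\vec\kappa}_i\kappa_i \ge \sum_i\psi_0(x_i)\kappa_i - C|\vec\kappa|^{3/2}$. This is a \emph{lower} bound on the weighted sum of the $\mu^{\vec\kappa}_i$, not an upper bound; it is in fact precisely inequality \eqref{3-3-5} in the paper's proof of Lemma~\ref{1-1-10}. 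Your claimed conclusion $\mu^{\vec\kappa}_i\le C|\vec\kappa|^{1/2}$ is not only not derived by this argument, it is false: Lemma~\ref{1-1-10} shows $\mu^{\vec\kappa}_i>\psi_0(x_i)-C|\vec\kappa|^{1/2}$, so $\mu^{\vec\kappa}_i$ stays bounded away from zero as $|\vec\kappa|\to 0$.

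The paper's proof of Lemma~\ref{1-1-3} is entirely elementary and uses no energy estimates. Pick any $x\in A^{\vec\kappa}_i$ (the set has measure $\kappa_i>0$); by definition $Gw^{\vec\kappa}(x)-\psi_0(x)+\mu^{\vec\kappa}_i\le 0$, so $\mu^{\vec\kappa}_i\le\psi_0(x)-Gw^{\vec\kappa}(x)<\psi_0(x_i)$, using that $\psi_0(x)\le\psi_0(x_i)$ on $\overline{B_{r_0}(x_i)}$ and $Gw^{\vec\kappa}>0$ in $D$ by the strong maximum principle. Note the logical order: this elementary upper bound is proved first and then \emph{used} (via \eqref{3-307}) to pass from the sum inequality \eqref{3-3-5} to the individual lower bound in Lemma~\ref{1-1-10}. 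You have the dependency reversed.
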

\begin{proof}
Since $A^{\vec{\kappa}}_i$ is obviously not empty, we can choose a point $x\in A^{\vec{\kappa}}_i$ to deduce that
\[Gw^{\vec{\kappa}}(x)-\psi_0(x)+\mu^{\vec{\kappa}}_i\leq 0,\]
or equivalently
\begin{equation}\label{3-301}
\mu^{\vec{\kappa}}_i\leq \psi_0(x)-Gw^{\vec{\kappa}}(x).
\end{equation}
Taking into account the facts that $\psi_0(x)<\psi_0(x_i)$ and $Gw^{\vec{\kappa}}>0$ in $D$(by strong maximum principle), we get the desired result.
\end{proof}
The lower bound of each $\mu^{\vec{\kappa}}_i$ is as follows.
\begin{lemma}\label{1-1-10}
For each fixed index $i$, we have
$\mu^{\vec{\kappa}}_i>\psi_0(x_i)-C|\vec{\kappa}|^{1/2}.$
\end{lemma}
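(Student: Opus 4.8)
The plan is to read off the lower bound for $\mu^{\vec{\kappa}}_i$ by combining the energy upper bound of Lemma~\ref{1-1-2} with the ``dead core'' lower bound of Lemma~\ref{1-1-1}. First I would rewrite Lemma~\ref{1-1-1}, using $\int_D w_i^{\vec{\kappa}}\,dx=\kappa_i$, in the form
\[
\mu^{\vec{\kappa}}_i\kappa_i\ \ge\ \int_D\psi_0(x)\,w_i^{\vec{\kappa}}(x)\,dx\ -\ \int_D Gw^{\vec{\kappa}}(x)\,w_i^{\vec{\kappa}}(x)\,dx\ -\ C\kappa_i^{3/2}.
\]
Thus, after dividing by $\kappa_i$, it remains to bound the first integral from below by $\psi_0(x_i)\kappa_i-C\kappa_i|\vec{\kappa}|^{1/2}$ and the second from above by $C\kappa_i|\vec{\kappa}|^{1/2}$.

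The key step is the lower bound for $\int_D\psi_0 w_i^{\vec{\kappa}}\,dx$, and here one has to descend from a global estimate to a per-core one. Since $E(w^{\vec{\kappa}})=\tfrac12\int_D Gw^{\vec{\kappa}}w^{\vec{\kappa}}\,dx\ge0$ and $\mathcal E(w^{\vec{\kappa}})=E(w^{\vec{\kappa}})-\int_D\psi_0 w^{\vec{\kappa}}\,dx$, Lemma~\ref{1-1-2} gives $\int_D\psi_0 w^{\vec{\kappa}}\,dx\ge\sum_{j=1}^k\psi_0(x_j)\kappa_j-C|\vec{\kappa}|^{3/2}$. On the other hand, $r_0$ was chosen so that $\psi_0\le\psi_0(x_j)$ on $\overline{B_{r_0}(x_j)}\supset supp(w_j^{\vec{\kappa}})$, so each term satisfies $\int_D\psi_0 w_j^{\vec{\kappa}}\,dx\le\psi_0(x_j)\kappa_j$. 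Hence $\sum_{j=1}^k\bigl(\psi_0(x_j)\kappa_j-\int_D\psi_0 w_j^{\vec{\kappa}}\,dx\bigr)$ is a sum of nonnegative numbers that is $\le C|\vec{\kappa}|^{3/2}$, so each summand is $\le C|\vec{\kappa}|^{3/2}$; in particular $\int_D\psi_0 w_i^{\vec{\kappa}}\,dx\ge\psi_0(x_i)\kappa_i-C|\vec{\kappa}|^{3/2}$.

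Finally, for the remaining integral the $L^p$--Sobolev chain already used in Lemma~\ref{1-1-2} gives $|Gw^{\vec{\kappa}}|_{L^\infty(D)}\le C|w^{\vec{\kappa}}|_{L^2(D)}\le C|\vec{\kappa}|^{1/2}$ (using $0\le w^{\vec{\kappa}}\le1$, the supports being disjoint for $r_0$ small), while $Gw^{\vec{\kappa}}\ge0$ by the strong maximum principle; hence $0\le\int_D Gw^{\vec{\kappa}}w_i^{\vec{\kappa}}\,dx\le C|\vec{\kappa}|^{1/2}\kappa_i$. Inserting the two bounds into the first display yields $\mu^{\vec{\kappa}}_i\kappa_i\ge\psi_0(x_i)\kappa_i-C|\vec{\kappa}|^{3/2}-C|\vec{\kappa}|^{1/2}\kappa_i-C\kappa_i^{3/2}$, and the cone condition $\vec{\kappa}\in\mathbb K^{\alpha}$ (which forces $\kappa_j\le C\kappa_i$ for all $j$, hence $|\vec{\kappa}|\le C\kappa_i$, so $|\vec{\kappa}|^{3/2}\le C\kappa_i|\vec{\kappa}|^{1/2}$ and $\kappa_i^{3/2}\le\kappa_i|\vec{\kappa}|^{1/2}$) lets me absorb all error terms into $C\kappa_i|\vec{\kappa}|^{1/2}$; dividing by $\kappa_i>0$ and enlarging $C$ to make the inequality strict gives $\mu^{\vec{\kappa}}_i>\psi_0(x_i)-C|\vec{\kappa}|^{1/2}$. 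I expect the only genuinely non-mechanical point to be the distribution argument in the second paragraph — one must use the sign of each defect $\psi_0(x_j)\kappa_j-\int_D\psi_0 w_j^{\vec{\kappa}}\,dx$ to convert the single global energy bound into $k$ separate core bounds; everything else is bookkeeping with the constants and with the uniform cone condition.
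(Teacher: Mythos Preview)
Your argument is correct and follows essentially the same route as the paper: combine the energy upper bound (Lemma~\ref{1-1-2}) with the per-core lower bound (Lemma~\ref{1-1-1}), use that $x_j$ is the maximum of $\psi_0$ on $\overline{B_{r_0}(x_j)}$ to isolate the $i$-th contribution, and finish with the uniform-cone condition. The only cosmetic difference is organizational: the paper first obtains the global inequality $\sum_j\kappa_j\mu_j^{\vec{\kappa}}\ge\sum_j\kappa_j\psi_0(x_j)-C|\vec{\kappa}|^{3/2}$ and then peels off the $j\neq i$ terms via Lemma~\ref{1-1-3} ($\mu_j^{\vec{\kappa}}<\psi_0(x_j)$), whereas you start from Lemma~\ref{1-1-1} for the single index $i$ and instead use the equivalent pointwise bound $\psi_0\le\psi_0(x_j)$ on $supp(w_j^{\vec{\kappa}})$ to distribute the global estimate on $\int_D\psi_0 w^{\vec{\kappa}}\,dx$ among the cores.
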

\begin{proof}
We write $\mathcal{E}(w^{\vec{\kappa}})$ as follows
\begin{equation}\label{3-302}
\begin{split}
\mathcal{E}(w^{\vec{\kappa}})&=\frac{1}{2}\int_DGw^{\vec{\kappa}}(x)w^{\vec{\kappa}}(x)dx
-\int_D\psi_0(x)w^{\vec{\kappa}}(x)dx\\
&=-\frac{1}{2}\int_DGw^{\vec{\kappa}}(x)w^{\vec{\kappa}}(x)dx
+\int_D(Gw^{\vec{\kappa}}(x)-\psi_0(x))w^{\vec{\kappa}}(x)dx\\
&=-\frac{1}{2}\int_DGw^{\vec{\kappa}}(x)w^{\vec{\kappa}}(x)dx
+\sum_{j=1}^k\int_D(Gw^{\vec{\kappa}}(x)-\psi_0(x)+\mu_j^{\vec{\kappa}})w_j^{\vec{\kappa}}(x)dx
-\sum_{j=1}^k\kappa_j\mu_j^{\vec{\kappa}}.
\end{split}
\end{equation}
For the first term, by $L^p$ estimate and Sobolev embedding, we have
\begin{equation}\label{3-303}
\left|\frac{1}{2}\int_DGw^{\vec{\kappa}}(x)w^{\vec{\kappa}}(x)dx\right|\leq C|\vec{\kappa}|^{\frac{3}{2}}.
\end{equation}
For the second term, by Lemma \ref{1-1-1}, we have
\begin{equation}\label{3-304}
\sum_{j=1}^k\int_D(Gw^{\vec{\kappa}}(x)-\psi_0(x)+\mu_j^{\vec{\kappa}})w_j^{\vec{\kappa}}(x)dx\geq-C|\vec{\kappa}|^{\frac{3}{2}}.
\end{equation}
Combining Lemma \ref{1-1-2}, \eqref{3-302}, \eqref{3-302} and \eqref{3-303}, we deduce that
\begin{equation}\label{3-3-5}
\sum_{j=1}^k\kappa_j\mu_j^{\vec{\kappa}}\geq \sum_{j=1}^k\kappa_j\psi_0(x_j)-C|\vec{\kappa}|^{\frac{3}{2}}.
\end{equation}
On the other hand, by Lemma \ref{1-1-3} we deduce that
\begin{equation}\label{3-307}
\sum_{j\neq i,j=1}^k\kappa_i\mu^{\vec{\kappa}}_i\leq \sum_{j\neq i,j=1}^k\kappa_i\psi_(x_i).
\end{equation}
It follows from \eqref{3-3-5} and \eqref{3-307} that
\begin{equation}\label{3-308}
\kappa_i\mu^{\vec{\kappa}}_i\geq\kappa_i\psi_0(x_i)-C|\vec{\kappa}|^{\frac{3}{2}}.
\end{equation}
Now the desired result follows from \eqref{3-308} and the fact that $\mathbb{K}^\alpha$ is an $\alpha$-uniform cone.
\end{proof}

\begin{lemma}\label{1-1-19}
For each fixed index $i$, we have
\[\lim_{|\vec{\kappa}|\to 0}\sup_{x\in A^{\vec{\kappa}}_i}|G\omega^{\vec{\kappa}}(x)-\psi_0(x_i)|=0.\]
\end{lemma}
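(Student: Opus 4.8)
The plan is to combine three facts established so far: the dead core $A^{\vec{\kappa}}_i$ is contained in the shrinking ball $B_{r_0}(x_i)$ and is defined by the level condition $G\omega^{\vec{\kappa}}(x)\geq\mu^{\vec{\kappa}}_i$; the stream function $G\omega^{\vec{\kappa}}$ converges to $\psi_0$ uniformly on $\overline{D}$ as $|\vec{\kappa}|\to0$ (by $L^p$ estimates and Sobolev embedding, since $\omega^{\vec{\kappa}}$ is uniformly bounded in $L^\infty$); and the two-sided estimate $\psi_0(x_i)-C|\vec{\kappa}|^{1/2}<\mu^{\vec{\kappa}}_i<\psi_0(x_i)$ from Lemmas \ref{1-1-3} and \ref{1-1-10}. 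Together these pin $G\omega^{\vec{\kappa}}$ on $A^{\vec{\kappa}}_i$ between $\mu^{\vec{\kappa}}_i$ and something close to $\psi_0(x_i)$.

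First I would fix an index $i$ and a point $x\in A^{\vec{\kappa}}_i$. By definition of the dead core, $G\omega^{\vec{\kappa}}(x)\geq\mu^{\vec{\kappa}}_i$, so using the lower bound on the multiplier, $G\omega^{\vec{\kappa}}(x)\geq\mu^{\vec{\kappa}}_i>\psi_0(x_i)-C|\vec{\kappa}|^{1/2}$. For the upper bound, I would use the uniform convergence $|G\omega^{\vec{\kappa}}-\psi_0|_{L^\infty(D)}=:\eta(\vec{\kappa})\to0$ together with the fact that $A^{\vec{\kappa}}_i\subset B_{r_0}(x_i)$ where $x_i$ is the unique maximum of $\psi_0$: for $x\in A^{\vec{\kappa}}_i$ one has $G\omega^{\vec{\kappa}}(x)\leq\psi_0(x)+\eta(\vec{\kappa})\leq\psi_0(x_i)+\eta(\vec{\kappa})$. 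Hence
\[
-C|\vec{\kappa}|^{1/2}<G\omega^{\vec{\kappa}}(x)-\psi_0(x_i)\leq\eta(\vec{\kappa}),
\]
and taking the supremum over $x\in A^{\vec{\kappa}}_i$ and then letting $|\vec{\kappa}|\to0$ gives the claim.

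There is a subtlety worth spelling out: the estimate $\mu^{\vec{\kappa}}_i>\psi_0(x_i)-C|\vec{\kappa}|^{1/2}$ from Lemma \ref{1-1-10} already presupposes $A^{\vec{\kappa}}_i$ lies in $B_{r_0}(x_i)$, which is how the admissible class $\mathcal{N}^{\vec{\kappa}}(D)$ is set up, so no circularity arises — the containment $A^{\vec{\kappa}}_i\subset B_{r_0}(x_i)$ is built in by construction, not something being derived here. I would also note the uniform convergence $G\omega^{\vec{\kappa}}\to\psi_0$ in $C^1(\overline{D})$ (even $C^{1,\beta}$) is immediate: $\|1-\omega^{\vec{\kappa}}\|_{L^\infty}\leq1$ with support in $\cup_i B_{r_0}(x_i)$ is not what forces convergence — rather, $\|\omega^{\vec{\kappa}}-1\|_{L^p(D)}=\|w^{\vec{\kappa}}\|_{L^p}\leq|\vec{\kappa}|^{1/p}\to0$, so $G\omega^{\vec{\kappa}}-G1=G\omega^{\vec{\kappa}}-\psi_0\to0$ in $W^{2,p}(D)$ for every finite $p$, hence in $C^1(\overline{D})$.

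I do not expect a genuine obstacle here: the lemma is essentially a bookkeeping consequence of the multiplier estimates plus the elliptic regularity already invoked repeatedly in the preceding proofs. The only point requiring minor care is making sure the "uniform maximum on $\overline{B_{r_0}(x_i)}$" property of $\psi_0$ — guaranteed by the choice of $r_0$ in the definition \eqref{2000} of the admissible class — is used to bound $\psi_0(x)$ above by $\psi_0(x_i)$ for $x\in A^{\vec{\kappa}}_i$, rather than the (weaker, and not yet available) statement that $A^{\vec{\kappa}}_i$ shrinks to $x_i$; that shrinking is in fact a corollary of the present lemma, proved afterwards.
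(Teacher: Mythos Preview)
Your proof is correct and follows essentially the same route as the paper: the lower bound via $G\omega^{\vec{\kappa}}(x)\geq\mu^{\vec{\kappa}}_i>\psi_0(x_i)-C|\vec{\kappa}|^{1/2}$ is identical. The only minor difference is in the upper bound: the paper applies the maximum principle directly (since $-\Delta(\psi_0-G\omega^{\vec{\kappa}})=w^{\vec{\kappa}}\geq0$ with zero boundary data) to get the exact inequality $G\omega^{\vec{\kappa}}(x)\leq\psi_0(x)\leq\psi_0(x_i)$ without any error term, whereas you use the $C^1$-convergence $G\omega^{\vec{\kappa}}\to\psi_0$ to obtain $G\omega^{\vec{\kappa}}(x)\leq\psi_0(x_i)+\eta(\vec{\kappa})$; both are equally valid here.
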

\begin{proof}
For each $x\in A^{\vec{\kappa}}_i$, by the definition of $A^{\vec{\kappa}}_i$
we have $G\omega^{\vec{\kappa}}(x)\geq\mu^{\vec{\kappa}}_i.$
Taking into account Lemma \ref{1-1-10}, we obtain
\begin{equation}\label{3-311}
G\omega^{\vec{\kappa}}(x) \geq\psi_0(x_i)-C|\vec{\kappa}|^{1/2}.
\end{equation}
On the other hand, by maximum principle and the fact that $x_i$ is the unique maximum point of $\psi_0$ on $\overline{B_{r_0}(x_i)}$, we have
\begin{equation}\label{3-312}
G\omega^{\vec{\kappa}}(x) \leq\psi_0(x)\leq\psi_0(x_i).
\end{equation}
Now the desired result follows from \eqref{3-311} and \eqref{3-312}.
\end{proof}

\begin{lemma}\label{1-1-1001}
$A^{\vec{\kappa}}_i\subset B_{o(1)}(x_i)$ as $|\vec{\kappa}|\to0$.
\end{lemma}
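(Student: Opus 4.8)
The plan is a short contradiction argument that plays off the two convergences already established: the uniform convergence $G\omega^{\vec{\kappa}}\to\psi_0$ on $\overline D$ (in fact in $C^1(\overline D)$, by the $L^p$ estimate and Sobolev embedding, as used in Section 3), and Lemma \ref{1-1-19}, which asserts that the values of $G\omega^{\vec{\kappa}}$ on the moving set $A^{\vec{\kappa}}_i$ converge uniformly to the single number $\psi_0(x_i)$.

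First I would read the statement quantitatively: $A^{\vec{\kappa}}_i\subset B_{o(1)}(x_i)$ as $|\vec{\kappa}|\to0$ means $\sup_{x\in A^{\vec{\kappa}}_i}|x-x_i|\to0$. Arguing by contradiction, suppose there are $\rho>0$, a sequence $\vec{\kappa}^{(n)}\in\mathbb K^\alpha$ with $|\vec{\kappa}^{(n)}|\to0$, and points $y_n\in A^{\vec{\kappa}^{(n)}}_i$ with $|y_n-x_i|\geq\rho$. Since $A^{\vec{\kappa}^{(n)}}_i\subset\overline{B_{r_0}(x_i)}$ for all $n$, after passing to a subsequence I may assume $y_n\to y_\ast$ with $y_\ast\in\overline{B_{r_0}(x_i)}$ and $|y_\ast-x_i|\geq\rho$, so in particular $y_\ast\neq x_i$.

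The second step is to evaluate $\lim_{n\to\infty}G\omega^{\vec{\kappa}^{(n)}}(y_n)$ in two ways and compare. Since $y_n\in A^{\vec{\kappa}^{(n)}}_i$, Lemma \ref{1-1-19} gives $G\omega^{\vec{\kappa}^{(n)}}(y_n)\to\psi_0(x_i)$. On the other hand,
\[|G\omega^{\vec{\kappa}^{(n)}}(y_n)-\psi_0(y_\ast)|\leq|G\omega^{\vec{\kappa}^{(n)}}-\psi_0|_{L^\infty(D)}+|\psi_0(y_n)-\psi_0(y_\ast)|\to0\]
by the uniform convergence of $G\omega^{\vec{\kappa}}$ to $\psi_0$ and the continuity of $\psi_0$, so $G\omega^{\vec{\kappa}^{(n)}}(y_n)\to\psi_0(y_\ast)$ as well. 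Hence $\psi_0(y_\ast)=\psi_0(x_i)$. But $r_0$ was chosen so that $x_i$ is the \emph{unique} maximum point of $\psi_0$ on $\overline{B_{r_0}(x_i)}$, and $y_\ast\in\overline{B_{r_0}(x_i)}$ with $y_\ast\neq x_i$, so $\psi_0(y_\ast)<\psi_0(x_i)$ --- a contradiction. This proves the lemma.

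I do not expect a genuine obstacle here: everything reduces to Lemma \ref{1-1-19}, the already-used $C^1$ convergence $G\omega^{\vec{\kappa}}\to\psi_0$, and a compactness extraction. The only mildly delicate point is matching two limits of different character --- convergence of numbers read off the shrinking sets $A^{\vec{\kappa}}_i$ against genuine uniform convergence of functions --- with the contradiction being forced precisely by the strict, isolated maximality of $\psi_0$ at $x_i$ inside $B_{r_0}(x_i)$, which is exactly the property built into the choice of $r_0$ in the definition of $\mathcal K^{\vec{\kappa}}(D)$.
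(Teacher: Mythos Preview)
Your proof is correct and follows essentially the same contradiction argument as the paper: both assume a sequence of points in $A^{\vec{\kappa}}_i$ staying a fixed distance from $x_i$, combine the uniform convergence $G\omega^{\vec{\kappa}}\to\psi_0$ with the unique-maximum property of $x_i$ on $\overline{B_{r_0}(x_i)}$ to force $G\omega^{\vec{\kappa}^{(n)}}(y_n)$ away from $\psi_0(x_i)$, and then contradict Lemma~\ref{1-1-19}. The only cosmetic difference is that you extract a convergent subsequence $y_n\to y_\ast$ and compare values at the limit point, whereas the paper argues directly with a uniform gap on the annulus $\overline{B_{r_0}(x_i)}\setminus B_\rho(x_i)$.
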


\begin{proof}
We prove this by contradiction. Suppose that there exist $\delta_1>0$, $\vec{\kappa}_n\in\mathbb K^\alpha, |\vec{\kappa}_n|<1/n$ and $x_n\in A^{\vec{\kappa}}_i$ such that $|x_n-x_i|\geq\delta_i$, where $n=1,2,3,\cdot\cdot\cdot.$ Since $G\omega^{\vec{\kappa}}\to\psi_0$ in $C^1(\overline{D})$ and $x_i$ is the unique maximum point of $\psi_0$ on $\overline{B_{r_0}(x_i)}$, we have
\[\inf_{n}|G\omega^{\vec{\kappa}}(x_n)-\psi_0(x_i)|>0,\]
which contradicts Lemma \ref{1-1-19}.
\end{proof}

\begin{lemma}
$A^{\vec{\kappa}}_i=\{x\in D\mid G\omega^{\vec{\kappa}}(x)=\mu^{\vec{\kappa}}_i\}\cap B_{r_0}(x_i)$  and $\mathbf{v}=\nabla^\perp G\omega^{\vec{\kappa}}=0$ a.e. on $A^{\vec{\kappa}}_i$ if $|\vec{\kappa}|$ is sufficiently small.
\end{lemma}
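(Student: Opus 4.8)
The plan is to establish two facts: that the (a priori) super-level set $A^{\vec{\kappa}}_i$ is in fact exactly a level set of $G\omega^{\vec{\kappa}}$, and that $\nabla G\omega^{\vec{\kappa}}$ vanishes almost everywhere on this level set. The first is the substantive point and rests on the dead-core structure of $\omega^{\vec{\kappa}}$ together with the localization already obtained in Lemma~\ref{1-1-1001}; the second will then be immediate from a classical property of Sobolev functions.

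First I would fix $|\vec{\kappa}|$ small enough that $A^{\vec{\kappa}}_i\subset B_{r_0/2}(x_i)$, which is possible by Lemma~\ref{1-1-1001}; then $\overline{A^{\vec{\kappa}}_i}$ is a compact subset of $B_{r_0}(x_i)\subset D$ (recall $r_0$ is chosen so small that each $B_{r_0}(x_j)\subset D$ and the balls $B_{r_0}(x_1),\dots,B_{r_0}(x_k)$ are pairwise disjoint). Set $U:=\{x\in B_{r_0}(x_i)\mid G\omega^{\vec{\kappa}}(x)>\mu^{\vec{\kappa}}_i\}$, which is open by continuity of $G\omega^{\vec{\kappa}}$ and satisfies $U\subset A^{\vec{\kappa}}_i$, hence $\overline{U}\subset\overline{A^{\vec{\kappa}}_i}\subset B_{r_0}(x_i)$. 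Inside $B_{r_0}(x_i)$ the disjointness of the balls gives $\omega^{\vec{\kappa}}=1-I_{A^{\vec{\kappa}}_i}$, so $\omega^{\vec{\kappa}}\equiv 0$ on $A^{\vec{\kappa}}_i$, in particular on $U$; therefore $-\Delta G\omega^{\vec{\kappa}}=0$ in $U$, i.e.\ $G\omega^{\vec{\kappa}}$ is harmonic in $U$.

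Next I would identify the boundary values of $G\omega^{\vec{\kappa}}$ on $\partial U$. Since $\overline{U}\subset B_{r_0}(x_i)$, every $y\in\partial U$ lies in $B_{r_0}(x_i)$; by continuity $G\omega^{\vec{\kappa}}(y)\geq\mu^{\vec{\kappa}}_i$, and the strict inequality $G\omega^{\vec{\kappa}}(y)>\mu^{\vec{\kappa}}_i$ is impossible, for then $y$ would be an interior point of $U$. Hence $G\omega^{\vec{\kappa}}\equiv\mu^{\vec{\kappa}}_i$ on $\partial U$, and the maximum principle applied to the harmonic function $G\omega^{\vec{\kappa}}$ on the bounded open set $U$ yields $G\omega^{\vec{\kappa}}\leq\mu^{\vec{\kappa}}_i$ throughout $U$. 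This contradicts the definition of $U$ unless $U=\varnothing$. Thus $U=\varnothing$, and since $A^{\vec{\kappa}}_i=\{x\in D\mid G\omega^{\vec{\kappa}}(x)\geq\mu^{\vec{\kappa}}_i\}\cap B_{r_0}(x_i)$ by construction, we obtain
\[A^{\vec{\kappa}}_i=\{x\in D\mid G\omega^{\vec{\kappa}}(x)=\mu^{\vec{\kappa}}_i\}\cap B_{r_0}(x_i).\]

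Finally, since $\omega^{\vec{\kappa}}\in L^\infty(D)$, the $L^p$ estimates give $G\omega^{\vec{\kappa}}\in W^{2,p}(D)$ for every $p\in(1,\infty)$, so in particular $G\omega^{\vec{\kappa}}\in W^{1,1}_{\mathrm{loc}}(D)$; by the classical fact (already used in the profile lemma above) that the gradient of a Sobolev function vanishes almost everywhere on each of its level sets, $\nabla G\omega^{\vec{\kappa}}=0$ a.e.\ on $\{x\in D\mid G\omega^{\vec{\kappa}}(x)=\mu^{\vec{\kappa}}_i\}$, hence a.e.\ on $A^{\vec{\kappa}}_i$. Consequently $\mathbf{v}=\nabla^\perp G\omega^{\vec{\kappa}}=0$ a.e.\ on $A^{\vec{\kappa}}_i$. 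The one delicate ingredient is ensuring that $\overline{A^{\vec{\kappa}}_i}$ stays away from $\partial B_{r_0}(x_i)$: otherwise $\partial U$ could meet $\partial B_{r_0}(x_i)$, where $G\omega^{\vec{\kappa}}$ need not equal $\mu^{\vec{\kappa}}_i$, and the maximum-principle step would fail; this is exactly what the limiting-behavior analysis culminating in Lemma~\ref{1-1-1001} supplies, so the remaining steps are routine.
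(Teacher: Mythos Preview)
Your proof is correct and follows essentially the same approach as the paper: both define the open set $U=\{G\omega^{\vec{\kappa}}>\mu^{\vec{\kappa}}_i\}\cap B_{r_0}(x_i)$, use Lemma~\ref{1-1-1001} to ensure $\partial U$ lies in the level set $\{G\omega^{\vec{\kappa}}=\mu^{\vec{\kappa}}_i\}$, observe that $G\omega^{\vec{\kappa}}$ is harmonic on $U$, and apply the maximum principle to reach a contradiction, then invoke the Sobolev-function property for the vanishing of $\nabla G\omega^{\vec{\kappa}}$. Your write-up is in fact a bit more careful than the paper's about the topological point that $\overline{U}\subset B_{r_0}(x_i)$ (so that $\partial U$ cannot touch $\partial B_{r_0}(x_i)$), which is exactly the issue you flag at the end.
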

\begin{proof}
To prove $A^{\vec{\kappa}}_i=\{x\in D\mid G\omega^{\vec{\kappa}}(x)=\mu^{\vec{\kappa}}_i\}\cap B_{r_0}(x_i)$, it suffices to show that $B^{\vec{\kappa}}_i:=\{x\in D\mid G\omega^{\vec{\kappa}}(x)>\mu^{\vec{\kappa}}_i\}\cap B_{r_0}(x_i)=\varnothing.$ We prove this by contradiction. By Lemma \ref{1-1-1001}, we have $\partial{B^{\vec{\kappa}}_i}\subset \{x\in D\mid G\omega^{\vec{\kappa}}(x)=\mu^{\vec{\kappa}}_i\}\cap B_{r_0}(x_i)$, or equivalently $G\omega^{\vec{\kappa}}=\mu^{\vec{\kappa}}_i$ on $\partial B^{\vec{\kappa}}_i$ if $|\vec{\kappa}|$ is sufficiently small. But $G\omega^{\vec{\kappa}}$ is harmonic in the open set $B^{\vec{\kappa}}_i$, so by strong maximum principle we deduce that $G\omega^{\vec{\kappa}}\equiv \mu^{\vec{\kappa}}_i$ in $B^{\vec{\kappa}}_i$, which is an obvious contradiction.

To prove that $\mathbf{v}$ vanishes on $A^{\vec{\kappa}}_i$, we observe that $A^{\vec{\kappa}}_i$ is included in a level set of $G\omega^{\vec{\kappa}}$. Then by the property of Sobolev functions(see \cite{EG}, Theorem 4.4), we have $\nabla G\omega^{\vec{\kappa}}=0$ a.e. on $A^{\vec{\kappa}}_i$ as desired.
\end{proof}

\subsection{Proof of Theorem \ref{thm2}} Now we are ready to finish the proof Theorem \ref{thm2}. We need only to show that $\omega^{\vec{\kappa}}$ is a steady solution of the vorticity equation if $|\vec{\kappa}|$ is sufficiently small, since other assertions have been verified in the previous subsections.

\begin{proof}[Proof of Theorem \ref{thm2}]
It suffices to show that
\begin{equation}\label{3-1000}
\int_D\omega^{\vec{\kappa}}(x)\nabla^\perp G\omega^{\vec{\kappa}}(x)\cdot\nabla\phi(x)dx=0,\,\,\forall\,\phi\,\in C_c^\infty(D),
\end{equation}
if $|\vec{\kappa}|$ is sufficiently small.
For any $\phi\in C_c^\infty(D)$, we construct a family of smooth transformations from $D$ to $D$ by solving the following ordinary differential equation
\begin{equation}\label{4-1}
\begin{cases}\frac{d\Phi_t(x)}{dt}=\nabla^\perp\phi(\Phi_t(x)), \,\,t\in\mathbb R, \\
\Phi_0(x)=x.
\end{cases}
\end{equation}
Since $\nabla^\perp\phi$ is a smooth vector field with compact support, $\eqref{4-1}$ admits a unique solution. Furthermore, since $\nabla^\perp\phi$ is obviously a divergence-free vector field, we know from Liouville theorem(see \cite{MP2}, Appendix 1.1) that $\Phi_t$ is an area-preserving transformation from $D$ to $D$, that is, for any measurable set $A\subset D$ there holds $|\{\Phi_t(x)\mid x\in A\}|=|A|$. Let $\{\omega^{\vec{\kappa}}_t\}_{t\in\mathbb R}$ be a family of test functions defined by
\begin{equation}
\omega^{\vec{\kappa}}_t(x):=\omega^{\vec{\kappa}}(\Phi_t(x)).
\end{equation}
By Lemma \ref{1-1-1001} and the fact that $\Phi_t$ is area-preserving, it is not difficult to check that $\omega^{\vec{\kappa}}_t\in \mathcal{K}^{\vec{\kappa}}(D)$ if $|\vec{\kappa}|$ is sufficiently small, so we have
\[\frac{dE(\omega^{\vec{\kappa}}_t)}{dt}\bigg|_{t=0}=0.\]
 Expanding $E(\omega^{\vec{\kappa}}_t)$ at $t=0$, we obtain for $|t|$ small
\[\begin{split}
E(\omega^{\vec{\kappa}}_t)=&\frac{1}{2}\int_D\int_DG(x,y)\omega^{\vec{\kappa}}(\Phi_t(x))\omega^{\vec{\kappa}}(\Phi_t(y))dxdy\\
=&\frac{1}{2}\int_D\int_DG(\Phi_{-t}(x),\Phi_{-t}(y))\omega^{\vec{\kappa}}(x)\omega^{\vec{\kappa}}(y)dxdy\\
=&E(\omega^{\vec{\kappa}})+t\int_D\omega^{\vec{\kappa}}(x)\nabla^\perp G\omega^{\vec{\kappa}}(x)\cdot\nabla\phi(x)dx+o(t).
\end{split}\]
 Therefore we get
\[\int_D\omega^{\vec{\kappa}}(x)\nabla^\perp G\omega^{\vec{\kappa}}(x)\cdot\nabla\phi(x) dx=0,\,\,\forall\,\phi\,\in C_c^\infty(D).\]
This finishes the proof.
\end{proof}

\end{document}